\numberwithin{equation}{section}
\newtheorem{theorem}{Theorem}[section]
\newtheorem{proposition}[theorem]{Proposition}
\newtheorem{corollary}[theorem]{Corollary}
\newtheorem{lemma}[theorem]{Lemma}
\theoremstyle{definition}
\newtheorem{remark}[theorem]{Remark}
\newtheorem{example}[theorem]{Example}
\newtheorem{definition}[theorem]{Definition}
\def\dim{\text{dim}}
\def\soc{\text{Soc}}
\def\nd{{\noindent}}
\def\ZZ{\mathbb{Z}}
\def\Acal{\mathcal{A}}
\def\Fcal{\mathcal{F}}
\def\Tcal{\mathcal{T}}
\def\dim{\text{dim}}
\renewcommand{\eqref}[1]{{\rm (\ref{#1})}}
\begin{document}

\title[On homomorphisms from Ringel-Hall algebras to quantum cluster algebras]
{On homomorphisms from Ringel-Hall algebras to quantum cluster algebras}

\author{Xueqing Chen, Ming Ding and Fan Xu}
\address{Department of Mathematical and Computer Sciences,
 University of Wisconsin-Whitewater\\
800 W.Main Street, Whitewater,WI.53190.USA}
\address{School of Mathematical Sciences and LPMC,
Nankai University, Tianjin, P.R.China}
\email{m-ding04@mails.tsinghua.edu.cn (M.Ding)}
\address{Department of Mathematical Sciences\\
Tsinghua University\\
Beijing 100084, P.~R.~China} \email{fanxu@mail.tsinghua.edu.cn
(F.Xu)}


\thanks{Ming Ding was supported by NSF of China (No. 11301282) and Specialized Research Fund for the Doctoral Program of Higher Education (No. 20130031120004) and Fan Xu was supported by NSF of China (No. 11071133).}

\subjclass[2000]{Primary  16G20, 17B67; Secondary  17B35, 18E30}

\date{\today}

\keywords{Ringel-Hall algebra, quantum cluster algebra, cluster variable, bipartite graph}

\maketitle

\begin{abstract}
 In \cite{rupel3},the authors  defined algebra homomorphisms from the dual Ringel-Hall algebra of certain hereditary abelian category $\mathcal{A}$ to an appropriate $q$-polynomial algebra. In the case that $\mathcal{A}$ is the representation category of  an acyclic  quiver, we give an alternative proof by using the cluster multiplication formulas in \cite{DX}. Moreover, if the underlying graph of $Q$ is bipartite and the matrix  $B$ associated to the
quiver $Q$ is of full rank, we show that the image of the algebra homomorphisms is in the corresponding quantum cluster algebra.
\end{abstract}


\section{Background}
The Ringel-Hall algebra $\mathcal{H}(\mathcal{A})$ of a (small) finitary abelian category $\mathcal{A}$ was introduced by Ringel (\cite{Ringel1990}). When $\mathcal{A}$ is the category $\mathrm{Rep}_{\mathbb{F}_q}Q$ of finite dimensional representations for a simply-laced quiver $Q$ over a finite field $\mathbb{F}_q$, the Ringel-Hall algebra $\mathcal{H}(\mathcal{A})$ is isomorphic to the positive part $U_q(\mathfrak{n})$ of the corresponding quantum group $U_q(\mathfrak{g})$ (\cite{Ringel1990}).  Lusztig (\cite{Lusztig}) constructed the canonical basis of the quantum group $U_q(\mathfrak{n})$ under the context of Ringel-Hall algebras. In order to study the canonical basis algebraically and combinatorially, Berenstein and Zelevinsky (\cite{berzel}) defined quantum cluster algebras as a noncommutative analogue of cluster algebras
(see \cite{ca1}\cite{ca2}).  A quantum cluster algebra  is a subalgebra of a skew field of rational functions in $q$-commuting
variable and generated by a
set of generators called the \textit{cluster variables}.

A natural question is to study the relations between Ringel-Hall algebras and quantum cluster algebras. Geiss, Leclerc and Schr\"oer (\cite{GLS}) showed that quantum groups of type $A, D$ and $E$ have quantum cluster structures. Recently, Berenstein and Rupel \cite{rupel3} constructed algebra homomorphisms from Ringel-Hall algebras to quantum cluster algebras. We recall their main result.
Let $\mathcal{A}$ be a finitary hereditary abelian category and
 $\mathbf{i}=(i_1,\cdots,i_m)$ be a sequence of simple objects in $\mathcal{A}$. They showed that,
under certain co-finiteness conditions, the assignment $[V]^{*}\rightarrow X_{V,\mathbf{i}}$
defines a homomorphism of algebras
 	$$\Psi_{\mathbf{i}}: \mathcal{H}^{*}(\mathcal{A})\rightarrow P_{\mathbf{i}}$$
where $\mathcal{H}^{*}(\mathcal{A})$ is dual Ringel-Hall algebra and $X_{V,\mathbf{i}}$ is the quantum cluster $\mathbf{i}$-character of $V$ in an appropriate $q$-polynomial algebra $P_{\mathbf{i}}$. Moreover, for an appropriate $\mathbf{i}$, the image restricting to the composition algebra of $\mathcal{H}^{*}(\mathcal{A})$ is in the corresponding upper cluster algebra.

The aim of this note is to give an alternative proof of the above result when $\mathcal{A}$ is the representation category of  an acyclic  quiver. Different from \cite{rupel3}, a key ingredient of our proof is to apply the cluster multiplication formulas proved in \cite{DX} (see also Theorem \ref{alg-homo}). We show that if the underlying graph of $Q$ is bipartite (i.e, we can associate this graph an orientation such that every vertex is a sink or a source) and the matrix  $B$ associated to the
quiver $Q$ is of full rank, then the algebra $\mathcal{AH}_{|k|}(Q)$
generated by all quantum cluster characters is exactly the quantum
cluster algebra $\mathcal{A}_{|k|}(Q)$ (see Theorem \ref{theorem}). As a corollary,  the image of the algebra homomorphism is in the quantum cluster algebra $\mathcal{A}_{|k|}(Q)$ (see Corollary \ref{included}). We expect that the approach in this note can be extended to construct algebra homomorphisms from derived Hall algebras to quantum cluster algebras.

\section{Quantum cluster algebras and Caldero-Chapoton maps}
\subsection{Quantum cluster algebras} We briefly recall the
definition of quantum cluster algebras. Let $L$ be a lattice of rank
$m$ and $\Lambda:L\times L\to \ZZ$ a skew-symmetric bilinear form.
We will need a formal variable $q$ and consider the ring of integer
Laurent polynomials $\ZZ[q^{\pm1/2}]$.  Define the \textit{based
quantum torus} associated to the pair $(L,\Lambda)$ to be the
$\ZZ[q^{\pm1/2}]$-algebra $\mathcal{T}$ with a distinguished
$\ZZ[q^{\pm1/2}]$-basis $\{X^e: e\in L\}$ and the  multiplication
given by
\[X^eX^f=q^{\Lambda(e,f)/2}X^{e+f}.\]
It is easy to see  that $\Tcal$ is associative and the basis
elements satisfy the following relations:
\[X^eX^f=q^{\Lambda(e,f)}X^fX^e,\ X^0=1,\ (X^e)^{-1}=X^{-e}.\]  It is known that $\Tcal$ is an Ore domain, i.e.,   is contained in its
skew-field of fractions $\Fcal$.  The quantum cluster algebra
 will be defined as a
$\ZZ[q^{\pm1/2}]$-subalgebra of $\Fcal$.

A \textit{toric frame} in $\Fcal$ is a map $M: \ZZ^m\to \Fcal
\setminus \{0\}$ of the form \[M({\bf c})=\varphi(X^{\eta({\bf
c})})\] where $\varphi$ is an automorphism of $\Fcal$ and $\eta:
\ZZ^m\to L$ is an  isomorphism of lattices.  By the definition, the
elements $M({\bf c})$ form a $\ZZ[q^{\pm1/2}]$-basis of the based
quantum torus $\Tcal_M:=\varphi(\Tcal)$ and satisfy the following
relations:
\[M({\bf c})M({\bf d})=q^{\Lambda_M({\bf c},{\bf d})/2}M({\bf c}+{\bf d}),\
M({\bf c})M({\bf d})=q^{\Lambda_M({\bf c},{\bf d})}M({\bf d})M({\bf
c}),\]
\[ M({\bf 0})=1,\ M({\bf c})^{-1}=M(-{\bf c}),\]
where $\Lambda_M$ is the skew-symmetric bilinear form on $\ZZ^m$
obtained from the lattice isomorphism $\eta$.  Let $\Lambda_M$ also
denote the skew-symmetric $m\times m$ matrix defined by
$\lambda_{ij}=\Lambda_M(e_i,e_j)$ where $\{e_1, \ldots, e_m\}$ is
the standard basis of $\ZZ^m$.  Given a toric frame $M$, let
$X_i=M(e_i)$.  Then we have
$$\Tcal_M=\ZZ[q^{\pm1/2}]\langle X_1^{\pm 1}, \ldots,
X_m^{\pm1}:X_iX_j=q^{\lambda_{ij}}X_jX_i\rangle.$$

Let $\Lambda$ be an $m\times m$ skew-symmetric matrix and let
$\tilde{B}$ be an $m\times n(m>n)$ matrix, whose principal part
denoted by $B$. We call the pair $(\Lambda, \tilde{B})$
\textit{compatible} if $\tilde{B}^T\Lambda=(D|0)$ is an $n\times m$
matrix with $D=diag(d_1,\cdots,d_n)$ where $d_i\in \mathbb{N}$ for
$1\leq i\leq n$. The pair $(M,\tilde{B})$ is called a
\textit{quantum seed} if the pair $(\Lambda_M, \tilde{B})$ is
compatible.  Now we define the mutation of the quantum seed $(\Lambda_M, \tilde{B})$ in  direction $k$
 for $1\leq k\leq n$.

Define the $m\times m$ matrix $E=(e_{ij})$ by
\[e_{ij}=\begin{cases}
\delta_{ij} & \text{if $j\ne k$;}\\
-1 & \text{if $i=j=k$;}\\
max(0,-b_{ik}) & \text{if $i\ne j = k$.}
\end{cases}
\]
For $n,k\in\ZZ$, $k\ge0$, denote ${n\brack
k}_q=\frac{(q^n-q^{-n})\cdots(q^{n-r+1}-q^{-n+r-1})}{(q^r-q^{-r})\cdots(q-q^{-1})}$.
Let ${\bf c}=(c_1,\ldots,c_m)\in\ZZ^m$ with $c_{k}\geq 0$.  Define
the toric frame $M': \ZZ^m\to \Fcal \setminus \{0\}$ as follows:
\begin{equation}\label{eq:cl_exp}M'({\bf c})=\sum^{c_k}_{p=0} {c_k \brack p}_{q^{d_k/2}} M(E{\bf c}+p{\bf b}^k),\ \ M'({\bf -c})=M'({\bf c})^{-1}.\end{equation}
where the vector ${\bf b}^k\in\ZZ^m$ is the $k-$th column of
$\tilde{B}$.

Define $\tilde{B}'=(b^{'}_{ij})$ by
\[b^{'}_{ij}=\begin{cases}
-b_{ij} & \text{if $i=k$ or $j=k$;}\\
b_{ij}+sgn(b_{ik}[b_{ik}b_{kj}]_{+}) & \text{otherwise.}
\end{cases}
\]
 where $[b]_{+}=max(b,0)$.

 Then the quantum seed $(M',\tilde{B}')$ is called
to be the mutation of $(M,\tilde{B})$ in direction $k$. Quantum
seeds are mutation-equivalent if they can be obtained from each
other by a sequence of mutations. Let $\mathcal{C}=\{M'(e_i):
i\in[1,n]\}$ where $(M',\tilde{B}')$ is mutation-equivalent to
$(M,\tilde{B})$.  Let $\mathbb{ZP}$ be the ring of integral Laurent
polynomials in the (quasi-commuting) variables in
$\{q^{1/2},X_{n+1},\cdots,X_{m}\}$. The \textit{quantum cluster
algebra} $\Acal_q(\Lambda_M,\tilde{B})$ is the
$\mathbb{ZP}$-subalgebra of $\Fcal$ generated by $\mathcal{C}$.

\begin{proposition}(Mutation of cluster variables)\cite{berzel}\label{lem:cluster_mutation}
The toric frame $X'$ is determined by
\begin{align}
X_k'&=X (\sum_{1\leq i\leq m}[b_{ik}]_{+} e_i -e_k)+X (\sum_{1\leq j\leq m}[-b_{jk}]_{+} e_j -e_k),\nonumber\\
X_k'&=X_i ,\quad 1\leq i\leq m,\quad i\neq k.\nonumber
\end{align}
\end{proposition}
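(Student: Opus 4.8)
The plan is to read off both formulas directly from the mutation rule \eqref{eq:cl_exp} by specializing the argument ${\bf c}$ to the standard basis vectors, recalling the identifications $X'_i=M'(e_i)$ and $X_i=M(e_i)$ (so that $X(\cdot)=M(\cdot)$ and $X'(\cdot)=M'(\cdot)$ in the notation of the statement). First I would dispose of the case $i\ne k$: since the $k$-th coordinate of $e_i$ vanishes, we have $c_k=0$, and the sum in \eqref{eq:cl_exp} collapses to its single $p=0$ term, namely $M'(e_i)=M(Ee_i)$. Because the first clause in the definition of $E$ forces the $i$-th column of $E$ to equal $e_i$ whenever $i\ne k$, this gives $M(Ee_i)=M(e_i)$, i.e. $X'_i=X_i$ as claimed.

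The substantive case is $i=k$. Here $c_k=1$, so \eqref{eq:cl_exp} reduces to a sum over $p\in\{0,1\}$, and I would first observe that both quantum binomial coefficients ${1\brack 0}_{q^{d_k/2}}$ and ${1\brack 1}_{q^{d_k/2}}$ equal $1$. It then remains only to evaluate the two exponent vectors $Ee_k$ and $Ee_k+{\bf b}^k$. Reading the $k$-th column of $E$ off its piecewise definition gives $Ee_k=\sum_{i\ne k}\max(0,-b_{ik})\,e_i-e_k=\sum_{1\le i\le m}[-b_{ik}]_+\,e_i-e_k$, where the $i=k$ contribution is absorbed via $[-b_{kk}]_+=0$; this is exactly the second summand in the statement. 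Adding ${\bf b}^k=\sum_i b_{ik}e_i$ and applying the elementary identity $\max(0,-b)+b=\max(b,0)$ coordinatewise yields $Ee_k+{\bf b}^k=\sum_{1\le i\le m}[b_{ik}]_+\,e_i-e_k$, which is the first summand.

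There is no genuine obstacle beyond this bookkeeping: the entire content of the proposition is encoded in \eqref{eq:cl_exp} together with the definition of $E$, and the only place requiring care is the sign convention when combining $Ee_k$ with the column ${\bf b}^k$ through the identity $[-b]_++b=[b]_+$. The vanishing diagonal entry $b_{kk}=0$ of the skew-symmetric principal part $B$ is precisely what makes the $i=k$ terms behave correctly in both computations. Accordingly I would present the argument as a short direct verification rather than invoking any deeper structure.
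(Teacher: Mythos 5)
Your verification is correct, and there is nothing in the paper to compare it against: the proposition is quoted from \cite{berzel} with no proof given, and in that source the mutated toric frame is \emph{defined} by exactly the formula reproduced here as \eqref{eq:cl_exp}, so a direct unwinding of that definition is the natural (indeed the only) argument. Your two computations are exactly right: for $i\neq k$ the $i$-th column of $E$ is $e_i$ and $c_k=0$, giving $M'(e_i)=M(Ee_i)=M(e_i)$; for $i=k$ both quantum binomial coefficients equal $1$, the $k$-th column of $E$ gives $Ee_k=\sum_{1\le i\le m}[-b_{ik}]_+e_i-e_k$, and adding ${\bf b}^k$ coordinatewise via $[-b]_++b=[b]_+$ gives $Ee_k+{\bf b}^k=\sum_{1\le i\le m}[b_{ik}]_+e_i-e_k$. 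One small correction to your justification of $b_{kk}=0$: in the general setting of Section 2.1 the principal part $B$ is not assumed skew-symmetric; what the compatibility condition $\tilde{B}^T\Lambda=(D|0)$ actually yields is that $DB=\tilde{B}^T\Lambda\tilde{B}$ is skew-symmetric, i.e.\ $B$ is skew-symmetrizable with positive diagonal $D$, and this already forces $b_{kk}=0$, which is all your argument needs (in the quiver setting of Section 2.2, $B$ is of course genuinely skew-symmetric). You also silently and correctly repaired the typo in the proposition's second line, where $X_k'=X_i$ should read $X_i'=X_i$.
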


\begin{theorem}(Quantum Laurent phenomenon)\cite{berzel}\label{laurent}
The quantum cluster algebra $\Acal_q(\Lambda_M,\tilde{B})$ is a
subalgebra of $\Tcal_M$.
\end{theorem}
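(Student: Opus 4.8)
The plan is to follow the strategy used for the commutative Laurent phenomenon (Fomin--Zelevinsky, and Berenstein--Fomin--Zelevinsky) and adapt each step to the $q$-commuting setting. Since $\Acal_q(\Lambda_M,\tilde{B})$ is generated over $\mathbb{ZP}$ by the cluster variables in $\mathcal{C}$, and the coefficient ring $\mathbb{ZP}$ already sits inside $\Tcal_M$, it suffices to show that every cluster variable $M_t(e_i)$ attached to a seed $(M_t,\tilde{B}_t)$ mutation-equivalent to $(M,\tilde{B})$ lies in the fixed quantum torus $\Tcal_M$. Equivalently, introducing the upper cluster algebra
$$\mathcal{U} = \bigcap_{t} \Tcal_{M_t} \subseteq \Fcal,$$
where $t$ runs over all seeds obtained from $(M,\tilde{B})$ by iterated mutation, one has $\mathcal{U}\subseteq\Tcal_M$ trivially, so the theorem reduces to proving the inclusion $\Acal_q(\Lambda_M,\tilde{B})\subseteq\mathcal{U}$.

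First I would dispose of the one-step case. By Proposition \ref{lem:cluster_mutation} the mutated variable $X_k'$ is a sum of two Laurent monomials in $\Tcal_M$, so each cluster variable of an adjacent seed already lies in $\Tcal_M$; symmetrically each initial variable lies in every adjacent torus. Hence every cluster variable of a seed and of its neighbours lies in the \emph{local upper bound}
$$\mathcal{U}(M) = \Tcal_M \cap \bigcap_{k=1}^{n}\Tcal_{M_k},$$
the intersection of the quantum torus of the seed with those of its $n$ neighbours $M_k=\mu_k(M)$. The heart of the matter is then to promote this local statement to a global one, namely to show that $\mathcal{U}(M)$ is independent of the seed: $\mathcal{U}(M)=\mathcal{U}(M_k)$ for each $k$. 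Granting this, a standard connectivity argument over the tree of seeds identifies every local upper bound with the global $\mathcal{U}$, and since each cluster variable lies in the local upper bound of some seed along the way, we obtain $\mathcal{C}\subseteq\mathcal{U}$.

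The main obstacle --- and the only place the quantum structure really intervenes --- is the proof that $\mathcal{U}(M)$ is mutation-invariant. The argument rests on a normal-form description: an element of $\Fcal$ lies in $\Tcal_M\cap\Tcal_{M_k}$ exactly when it is Laurent in $X_1,\dots,X_m$ and its $X_k$-expansion has numerator coprime to both monomials of the exchange binomial in \eqref{eq:cl_exp}. Here I would exploit the compatibility condition $\tilde{B}^{T}\Lambda_M=(D\mid 0)$ with $D=\mathrm{diag}(d_1,\dots,d_n)$: since $D$ is nonsingular this forces $\tilde{B}$ to have full column rank $n$, which is precisely what prevents the denominators $X_k$ and $X_k'$ from colliding and lets one run the two-variable Laurent analysis cleanly. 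Carrying this out requires tracking the $q$-powers produced by the relations $X^{e}X^{f}=q^{\Lambda_M(e,f)/2}X^{e+f}$ and verifying that the quantum binomial coefficients ${c_k\brack p}_{q^{d_k/2}}$ in \eqref{eq:cl_exp} recombine correctly under a double mutation $\mu_k\mu_k=\mathrm{id}$; this bookkeeping, rather than any conceptual difficulty, is where the real work lies.

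Finally, once mutation-invariance of the local upper bound is established the conclusion is immediate: $\Acal_q(\Lambda_M,\tilde{B})$ is generated over $\mathbb{ZP}$ by cluster variables, each of which belongs to $\mathcal{U}=\mathcal{U}(M)\subseteq\Tcal_M$, giving the desired containment $\Acal_q(\Lambda_M,\tilde{B})\subseteq\Tcal_M$.
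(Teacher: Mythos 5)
The paper does not actually prove Theorem \ref{laurent}: it is quoted from Berenstein--Zelevinsky \cite{berzel}, so the only meaningful comparison is with the proof in that reference --- and your outline is exactly that proof's architecture. Reducing to cluster variables, showing each one lies in the local upper bound $\Tcal_M\cap\bigcap_{k}\Tcal_{M_k}$ of its own seed, proving that these local upper bounds are invariant under mutation, and concluding by connectivity of the exchange tree is precisely the strategy of \cite{berzel}; so is your key structural observation that the compatibility condition $\tilde{B}^{T}\Lambda_M=(D\mid 0)$ with $D$ nonsingular forces $\tilde{B}$ to have full column rank, which is what makes the coprimality hypotheses of the commutative Berenstein--Fomin--Zelevinsky argument automatic in the quantum setting. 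The steps you do spell out (the one-step Laurent property from Proposition \ref{lem:cluster_mutation}, membership of each cluster variable in its own local upper bound, the connectivity argument) are all correct.

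The one substantive flaw is your ``normal-form description'' of $\Tcal_M\cap\Tcal_{M_k}$: membership is not a coprimality condition on a numerator. The correct characterization (the quantum analogue of the Berenstein--Fomin--Zelevinsky lemma) is a \emph{divisibility} condition: writing $y\in\Tcal_M$ as $y=\sum_{r\in\ZZ}c_rX_k^{r}$ with coefficients $c_r$ in the based quantum torus generated by the remaining variables, $y$ lies in $\Tcal_{M_k}$ if and only if for every $r<0$ the coefficient $c_r$ is divisible by the $|r|$-th power of the exchange binomial $X(\sum_i[b_{ik}]_{+}e_i)+X(\sum_i[-b_{ik}]_{+}e_i)$; this is seen by substituting $X_k=(X_k')^{-1}\cdot q^{\ast}(\text{binomial})$ into the expansion. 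Coprimality enters at a different point: one needs the exchange binomials attached to distinct indices, and to a seed and its mutation, to be mutually coprime in order to run the invariance argument, and that is where full rank of $\tilde{B}$ is actually used. Since the mutation-invariance of upper bounds is the entire content of the theorem --- it is not bookkeeping, it occupies the technical core of \cite{berzel} --- and the lemma you propose to hang it on is misstated, the plan as written would stall exactly at the step you dismiss as routine.
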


Set $\textbf{X}=\{X_1,\cdots,X_n\}$ and
$\textbf{X}_{k}=\textbf{X}-\{X_k\}\cup \{X'_k\}$  for any $k\in
[1,n]$. Denote by $\mathcal{U}(\Lambda_M,\tilde{B})$ is the
$\mathbb{ZP}$-subalgebra of $\Fcal$ given by
$$\mathcal{U}(\Lambda_M,\tilde{B})=\mathbb{ZP}[\textbf{X}^{\pm 1}]\cap \mathbb{ZP}[\textbf{X}_{1}^{\pm 1}]\cap\cdots\cap \mathbb{ZP}[\textbf{X}_{n}^{\pm 1}].$$
We call  $\mathcal{U}(\Lambda_M,\tilde{B})$ the \textit{quantum upper
cluster algebra}. The following result shows that the acyclicity
condition closes the gap between the upper bounds and the
corresponding quantum cluster algebras.

\begin{theorem}\cite{berzel}\label{upper}
If the principal matrix $B$ is acyclic, then
$\mathcal{U}(\Lambda_M,\tilde{B})=\Acal_q(\Lambda_M,\tilde{B}).$
\end{theorem}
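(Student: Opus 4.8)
The plan is to establish the two inclusions $\mathcal{A}_q(\Lambda_M,\tilde{B}) \subseteq \mathcal{U}(\Lambda_M,\tilde{B})$ and $\mathcal{U}(\Lambda_M,\tilde{B}) \subseteq \mathcal{A}_q(\Lambda_M,\tilde{B})$ separately, with essentially all of the work concentrated in the second. The first is immediate from the quantum Laurent phenomenon (Theorem \ref{laurent}): applied to the initial seed it gives $\mathcal{A}_q \subseteq \mathcal{T}_M = \mathbb{ZP}[\mathbf{X}^{\pm 1}]$, and applied to each seed obtained by a single mutation in a direction $k\in[1,n]$ it gives $\mathcal{A}_q \subseteq \mathbb{ZP}[\mathbf{X}_k^{\pm 1}]$; intersecting over $k$ yields $\mathcal{A}_q \subseteq \mathcal{U}$.

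For the reverse inclusion I would introduce the quantum lower bound $\mathcal{L}$, the $\mathbb{ZP}$-subalgebra of $\mathcal{F}$ generated by $X_1, X_1', \dots, X_n, X_n'$. Since every generator is a cluster variable, $\mathcal{L} \subseteq \mathcal{A}_q \subseteq \mathcal{U}$, so it suffices to prove the single containment $\mathcal{U} \subseteq \mathcal{L}$, which then collapses all three algebras to one. The engine for this is the theory of \emph{standard monomials}: using the quantum exchange relation $X_k X_k' = q^{\alpha_k} M(\mathbf{p}_k^{+}) + q^{\beta_k} M(\mathbf{p}_k^{-})$ coming from Proposition \ref{lem:cluster_mutation}, any monomial in the generators of $\mathcal{L}$ can be rewritten as a $\mathbb{ZP}$-linear combination of those monomials $\prod_i X_i^{a_i}(X_i')^{b_i}$ in which $a_i b_i = 0$ for every $i$, i.e. in which $X_i$ and $X_i'$ never occur together. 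These are the standard monomials.

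The two facts I need are: (a) the standard monomials are $\mathbb{ZP}$-linearly independent, hence a basis of $\mathcal{L}$; and (b) they span $\mathcal{U}$. For (a) I would expand each standard monomial as a Laurent polynomial in the initial variables $X_1,\dots,X_n$: because $X_i'$ is a binomial with two distinct exponents and the supports $\{i : a_i \neq 0\}$ and $\{i : b_i \neq 0\}$ are disjoint, distinct standard monomials acquire distinct leading exponent vectors in $\mathcal{T}_M$, while the accompanying half-integer powers of $q$ are units in $\mathbb{ZP}$ and so cannot produce cancellation; independence follows. For (b) I would take $y \in \mathcal{U}$, expand it in powers of the single variable $X_k$ with coefficients Laurent in the remaining initial variables, and read off from the condition $y \in \mathbb{ZP}[\mathbf{X}_k^{\pm 1}]$ a divisibility constraint by the exchange binomial $M(\mathbf{p}_k^{+})+M(\mathbf{p}_k^{-})$. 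Here the two exchange monomials have disjoint support, hence are coprime, which is precisely what lets the membership condition be read as such a constraint, and the constraint is exactly the one characterising combinations of standard monomials. Acyclicity of $B$ furnishes a topological order on the vertices compatible with the orientation of the quiver of $B$, and running the divisibility reduction in this order lets an induction peel off the extremal exponents and terminate inside $\mathcal{L}$.

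The main obstacle I anticipate is part (b), and within it the control of the powers of $q$. In the classical case one compares exponent vectors directly, but here every use of the exchange relation or of the quasi-commutation $X_iX_j = q^{\lambda_{ij}}X_jX_i$ injects a half-integer power of $q$, and one must verify that these all agree with the powers dictated by the toric frame $M$, so that the reduction genuinely lands in $\mathcal{L}$ with coefficients in $\mathbb{ZP}$ rather than in some larger ring. The compatibility condition $\tilde{B}^{T}\Lambda_M = (D\,|\,0)$ is what makes the exchange relation homogeneous for the $L$-grading and pins these powers down uniquely; combining this homogeneity with the acyclic ordering should keep the bookkeeping finite and force every element of $\mathcal{U}$ into the span of the standard monomials, completing $\mathcal{U} \subseteq \mathcal{L}$ and hence the theorem.
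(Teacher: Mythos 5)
This theorem is nowhere proved in the paper: it is imported verbatim from Berenstein--Zelevinsky \cite{berzel}, so the only meaningful comparison is with the proof in that source. Your outline is in fact a reconstruction of that proof (the quantum analogue of the Berenstein--Fomin--Zelevinsky ``upper and lower bounds'' argument): the inclusion $\Acal_q(\Lambda_M,\tilde{B})\subseteq\mathcal{U}(\Lambda_M,\tilde{B})$ follows from the Laurent phenomenon applied to the initial seed and its $n$ adjacent seeds, and the reverse inclusion is obtained by introducing the lower bound $\mathcal{L}$ generated by $X_1,X_1',\dots,X_n,X_n'$ and collapsing the chain $\mathcal{L}\subseteq\Acal_q\subseteq\mathcal{U}\subseteq\mathcal{L}$ by means of standard monomials. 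Your remark that the compatibility condition $\tilde{B}^T\Lambda_M=(D|0)$ is what pins down the half-integer powers of $q$ is also the correct mechanism in the quantum setting.

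There is, however, a genuine gap in your step (a). Linear independence of the standard monomials is not a soft consequence of each $X_i'$ being a binomial and of the supports being disjoint: it is \emph{equivalent} to acyclicity of $B$ (classically this is Theorem 1.16 of Berenstein--Fomin--Zelevinsky's ``Cluster algebras III''; \cite{berzel} prove the quantum analogue). For a cyclic $B$, e.g.\ the Markov quiver, the standard monomials are linearly \emph{dependent}, so an argument for (a) that never invokes acyclicity, as yours does, proves too much and therefore cannot be correct as stated. The false step is precisely ``distinct standard monomials acquire distinct leading exponent vectors'': without acyclicity there is no term order making the choice of leading term of each binomial $X_i'$ consistent across products, and coincidences of exponents do occur. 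The repair is to spend acyclicity here, and not only in (b): order the vertices so that $b_{ij}\geq 0$ for all $i<j$; then in the exchange binomial at vertex $k$ one of the two monomials involves only exchangeable variables $X_i$ with $i<k$ and the other only those with $i>k$ (frozen variables may appear in both), and with respect to the induced lexicographic order distinct standard monomials do have distinct leading exponents. Two further caveats: your step (b) compresses what is the bulk of the argument in \cite{berzel} (the inductive reduction of an element of $\mathcal{U}$ to a combination of standard monomials occupies several lemmas there), and it also needs coprimality of the seed, which in the quantum setting is automatic precisely because compatibility forces $\tilde{B}$ to have full column rank.
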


\subsection{Quantum Caldero-Chapoton maps}
Let $k$ be a finite field with cardinality $|k|=q$ and $m\geq n$ be
two positive integers and $\widetilde{Q}$ an acyclic quiver
with vertex set $\{1,\ldots,m\}$. Denote the subset
$\{n+1,\dots,m\}$ by $C$. The full subquiver $Q$ on the vertices
$1,\ldots,n$ is called the \emph{principal part} of $\widetilde{Q}$.
For $1\leq i\leq m$, let $S_i$ be the $i$th simple module for
$k\widetilde{Q}.$

Let $\widetilde{B}$ be the $m\times n$ matrix associated to the
quiver $\widetilde{Q}$ whose entry in position $(i,j)$ given by
\[
b_{ij}=|\{\mathrm{arrows}\, i\longrightarrow
j\}|-|\{\mathrm{arrows}\, j\longrightarrow i\}|
\]
for $1\leq i\leq m$, $1\leq j\leq n$. Denote by $\widetilde{I}$ the
left $m\times n$ submatrix of the identity matrix of size $m\times
m$. Assume that there exists some antisymmetric $m\times m$ integer
matrix $\Lambda$ such that
\begin{align}\label{eq:simply_laced_compatible}
\Lambda(-\widetilde{B})=\begin{bmatrix}I_n\\0
\end{bmatrix},
\end{align}
where $I_n$ is the identity matrix of size $n\times n$. Let
$\widetilde{R}=\widetilde{R}_{\widetilde{Q}}$ be the $m\times n$
matrix with its entry in position $(i,j)$ given by
\[
\widetilde{r}_{ij}:=\mathrm{dim}_{k}\mathrm{Ext}^{1}_{k\widetilde{Q}}(S_j,S_i)=|\{\mathrm{arrows}\,
j\longrightarrow i\}|.
\] for $1\leq i\leq m$, $1\leq j\leq n$. Set
$\widetilde{R}^{tr}=\widetilde{R}_{\widetilde{Q}^{op}}.$
 Denote the principal $n\times n$ submatrices of
 $\widetilde{B}$ and $\widetilde{R}$ by $B$ and $R$
respectively. Note that
$\widetilde{B}=\widetilde{R}^{tr}-\widetilde{R}$ and $B=R^{tr}-R$.

Let $\mathcal C_{\widetilde{Q}}$ be the cluster category of $k
\widetilde{Q}$, i.e., the orbit category of the derived category
$\mathcal{D}^b(\widetilde{Q})$ under the action of  the functor
$F=\tau\circ[-1]$ (see \cite{BMRRT}). Let  $I_i$ be the
indecomposable injective $k \widetilde{Q}$ module for $1\leq i \leq
m.$ Then the indecomposable $k \widetilde{Q}$-modules and $I_i[-1]$
for $1\leq i \leq m$ exhaust all indecomposable objects of the
cluster category $\mathcal C_{\widetilde{Q}}$. Each object $M$ in
$\mathcal C_{\widetilde{Q}}$ can be uniquely decomposed as
$$M=M_0\oplus I_M[-1]$$
where $M_0$ is a module and $I_M$ is an injective  module.

The Euler form on $k \widetilde{Q}$-modules $M$ and $N$ is given by
$$\langle M,N\rangle=\mathrm{dim}_{k}\mathrm{Hom}(M,N)-\mathrm{dim}_{k}\mathrm{Ext}^{1}(M,N).$$
Note that the Euler form only depends on the dimension vectors of
$M$ and $N$.

The quantum Caldero-Chapoton map of an acyclic quiver $Q$ has been
defined in \cite{rupel}\cite{fanqin}\cite{DX}\cite{D}:
$$X_?: \mathrm{obj}\mathcal C_{\widetilde{Q}}\longrightarrow \Tcal$$
 by the following rules:\\
(1)\ If $M$ is a $k Q$-module, then
                    $$
                       X_{M}=\sum_{\underline{e}} |\mathrm{Gr}_{\underline{e}} M|q^{-\frac{1}{2}
\langle
\underline{e},\underline{m}-\underline{e}\rangle}X^{-\widetilde{B}\underline{e}-(\widetilde{I}-\widetilde{R}^{tr})\underline{m}};$$
(2)\ If $M$ is a $k Q$-module and $I$ is an injective $k
\widetilde{Q}$-module, then
                    $$
                       X_{M\oplus I[-1]}=\sum_{\underline{e}} |\mathrm{Gr}_{\underline{e}} M|q^{-\frac{1}{2}
\langle
\underline{e},\underline{m}-\underline{e}-\underline{i}\rangle}X^{-\widetilde{B}\underline{e}-(\widetilde{I}-\widetilde{R}^{tr})\underline{m}+\underline{\mathrm{dim}}
soc I},
                    $$
where $\underline{\mathrm{dim}} I= \underline{i},
\underline{\mathrm{dim}} M= \underline{m}$ and
$\mathrm{Gr}_{\underline{e}}M$ denotes the set of all submodules $V$
of $M$ with $\underline{\mathrm{dim}} V= \underline{e}$. We note
that
$$
X_{P[1]}=X_{\tau P}=X^{\underline{\mathrm{dim}} P/rad
P}=X^{\underline{\mathrm{dim}}\mathrm{soc}I}=X_{I[-1]}=X_{\tau^{-1}I}.
$$
for any projective $k\widetilde{Q}$-module $P$ and injective
$k\widetilde{Q}$-module $I$ with $\mathrm{soc}I=P/\mathrm{rad}P.$

In the following, we always use  the underlined
lower  letter $\underline{x}$ to denote  the corresponding  dimension vector of a
$kQ$-module $X$ and view $\underline{x}$ as a column vector in
$\mathbb{Z}^n.$

\section{The dual Ringel-Hall algebras and the cluster multiplication formulas}
Let $\mathcal{A}$ be the representation category of  an acyclic  quiver $Q$. For an object $V\in \mathcal{A}$, we will
write $[V]$ for the isomorphism class of V and write $|V|$ for the class of $V$ in the Grothendieck group
$K(\mathcal{A})$. Let $\mathcal{H}(\mathcal{A})=\bigoplus k[V]$ be the free $K(\mathcal{A})$-graded $k$-vector space spanned by the isomorphism
classes of objects of $\mathcal{A}$ with the natural grading via class in $K(\mathcal{A})$. For $U,V,W\in \mathcal{A}$ define
$$g_{UW}^{V}=|\{R\subset V|R\cong W,V/R\cong U\}|.$$
The assignment $[U][W]=\sum_{[V]}g_{UW}^{V}[V]$ defines an associative multiplication on $\mathcal{H}(\mathcal{A})$.
The algebra $\mathcal{H}(\mathcal{A})$ is known as the Ringel-Hall algebra. Denote by $\mathcal{H}^{*}(\mathcal{A})$
the  dual Ringel-Hall algebra, which is the space of linear functions $\mathcal{H}(\mathcal{A})\rightarrow k$ with a basis of
all delta-functions $\delta_{V}$ labeled by isomorphism classes $[V]$ of objects of $\mathcal{A}$.
\begin{proposition}\label{Hall-Ringel}
Let $M$ and $N$  be $kQ$-modules, then the assignment
$$\delta_{M}\ast\delta_{N}=q^{\frac{1}{2}\Lambda((\widetilde{I}-\widetilde{R}^{'})\underline{m},
(\widetilde{I}-\widetilde{R}^{'})\underline{n})+\langle\underline{m},\underline{n}\rangle}\sum_{E}h^{MN}_{E}\delta_E$$ defines an associative multiplication on $\mathcal{H}^{*}(\mathcal{A})$, where $h^{MN}_{E}=\frac{|\mathrm{Ext}_{kQ}^{1}(M,N)_{E}|}{|\mathrm{Hom}_{kQ}(M,N)|}$.
\end{proposition}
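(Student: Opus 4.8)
The plan is to decouple the two ingredients of the formula: the ``untwisted'' structure constants $h^{MN}_{E}$ and the scalar prefactor $q^{(\cdots)}$. I will show that the former already give an associative product, and that the latter is a $2$-cocycle for the grading by dimension vector, so that twisting preserves associativity.

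First I would recognise $h^{MN}_{E}$ as the structure constants of the dual Hall product. By the Riedtmann--Peng formula,
$$g^{E}_{MN}=\frac{|\mathrm{Ext}^{1}_{kQ}(M,N)_{E}|\,|\mathrm{Aut}(E)|}{|\mathrm{Hom}_{kQ}(M,N)|\,|\mathrm{Aut}(M)|\,|\mathrm{Aut}(N)|},$$
so that $h^{MN}_{E}=g^{E}_{MN}\,|\mathrm{Aut}(M)|\,|\mathrm{Aut}(N)|/|\mathrm{Aut}(E)|$; these are exactly the coefficients obtained when one dualises the Hall comultiplication of $\mathcal{H}(\mathcal{A})$. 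Consequently the assignment $\delta_{M}\circ\delta_{N}=\sum_{E}h^{MN}_{E}\delta_{E}$ is associative, this being dual to the coassociativity of that comultiplication (Green's theorem). This is the one genuinely nontrivial input, and I would simply invoke it.

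Next I would isolate the twist. Writing the prefactor as $q^{\beta(\underline{m},\underline{n})}$ with
$$\beta(\underline{m},\underline{n})=\tfrac12\Lambda\big((\widetilde{I}-\widetilde{R}')\underline{m},(\widetilde{I}-\widetilde{R}')\underline{n}\big)+\langle\underline{m},\underline{n}\rangle,$$
I note that $\Lambda$ and the Euler form $\langle-,-\rangle$ are bilinear and $\underline{m}\mapsto(\widetilde{I}-\widetilde{R}')\underline{m}$ is linear, so $\beta$ is a bilinear form on the Grothendieck group. Moreover $h^{MN}_{E}=0$ unless $\underline{e}=\underline{m}+\underline{n}$, since any extension counted by $\mathrm{Ext}^{1}(M,N)_{E}$ has middle term of dimension vector $\underline{m}+\underline{n}$; hence the product is graded by dimension vector and the prefactor depends only on the dimension vectors of the arguments.

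Finally I would verify associativity directly. Expanding $(\delta_{M}\ast\delta_{N})\ast\delta_{P}$ and $\delta_{M}\ast(\delta_{N}\ast\delta_{P})$, the $h$-coefficients coincide by the first step, while the accumulated scalars are $q^{\beta(\underline{m},\underline{n})+\beta(\underline{m}+\underline{n},\underline{p})}$ and $q^{\beta(\underline{n},\underline{p})+\beta(\underline{m},\underline{n}+\underline{p})}$ respectively. Bilinearity of $\beta$ makes both exponents equal to $\beta(\underline{m},\underline{n})+\beta(\underline{m},\underline{p})+\beta(\underline{n},\underline{p})$, so the two iterated products agree. Thus the real obstacle is entirely contained in the coassociativity of the Hall comultiplication; once that is granted, the cocycle identity needed for the bilinear twist holds automatically, which is precisely what makes this formulation efficient.
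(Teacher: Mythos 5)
Your proposal is correct and follows essentially the same route as the paper's own proof: both identify $h^{MN}_{E}$ with the rescaled Hall numbers $g^{E}_{MN}|\mathrm{Aut}(M)||\mathrm{Aut}(N)||\mathrm{Aut}(E)|^{-1}$ via the Riedtmann--Peng formula, so that the untwisted coefficients inherit the associativity identity of Hall numbers, and then both observe that the exponent is a bilinear form in the dimension vectors, which (since $h^{MN}_{E}$ vanishes unless $\underline{e}=\underline{m}+\underline{n}$) makes the twist compatible with associativity. The only difference is presentational: you phrase the first step as coassociativity of Green's comultiplication and the second as a $2$-cocycle argument, spelling out details the paper compresses into ``hence the associativity can be deduced.''
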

\begin{proof}
Note that $\sum_{E}g^{E}_{MN}g^{F}_{EL}=\sum_{G}g^{G}_{NL}g^{F}_{MG},$ and the relation of $h^{MN}_{E}$ and $g^{E}_{MN}$ is given by the Riedtmann-Peng's formula
$$h^{MN}_{E}=g^{E}_{MN}|Aut(M)||Aut(N)||Aut(E)|^{-1}.$$
Thus we have $\sum_{E}h^{E}_{MN}h^{F}_{EL}=\sum_{G}h^{G}_{NL}h^{F}_{MG}.$ It is easy to see that
$\phi(\underline{m},\underline{n}):=\frac{1}{2}\Lambda((\widetilde{I}-\widetilde{R}^{'})\underline{m},
(\widetilde{I}-\widetilde{R}^{'})\underline{n})+\langle\underline{m},\underline{n}\rangle$ is a bilinear form on $\mathbb{Z}^{n}$. Hence the associativity can be  deduced.
\end{proof}

For any $k\widetilde{Q}-$modules $M,N$ and $E$, denote by
$\varepsilon_{MN}^{E}$  the cardinality of the set
$\mathrm{Ext}_{k\widetilde{Q}}^{1}(M,N)_{E}$ which is the subset of
$ \mathrm{Ext}_{k\widetilde{Q}}^{1}(M,N)$ consisting of those
equivalence classes of short exact sequences with middle term
isomorphic to $E$.  Define
$$\mathrm{Hom}_{k\widetilde{Q}}(M,I)_{BI'}:=\{f:M\longrightarrow I|ker f\cong B,coker f\cong
I'\}.$$
Denote $$[M,N]=\mathrm{dim}_{k}\mathrm{Hom}_{k\widetilde{Q}}(M,N),$$
$$[M,N]^{1}=\mathrm{dim}_{k}\mathrm{Ext}_{k\widetilde{Q}}^{1}(M,N).$$
We have the
following cluster multiplication formulas.
\begin{theorem}\cite{DX}\cite{D}\label{multi}
Let $M$ and $N$  be any $kQ$-modules,  and $I$ any injective
$k\widetilde{Q}$-module, then
$$(1)\ q^{[M,N]^{1}}X_{M}X_{N}=q^{\frac{1}{2}\Lambda((\widetilde{I}-\widetilde{R}^{'})\underline{m},
(\widetilde{I}-\widetilde{R}^{'})\underline{n})}
\sum_{E}\varepsilon_{MN}^{E}X_E,$$
$$(2)\ q^{[M,I]}X_{M}X_{I[-1]}=q^{\frac{1}{2}\Lambda((\widetilde{I}-\widetilde{R}^{'})\underline{m},
-\mathrm{\underline{dim}}\soc I)}
\sum_{B,I'}|\mathrm{Hom}_{k\widetilde{Q}}(M,I)_{BI'}|X_{B\oplus
I'[-1]}.$$
\end{theorem}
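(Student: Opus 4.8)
The plan is to prove both identities by expanding each side in the distinguished monomial basis $\{X^{g}\}$ of the quantum torus $\Tcal$ and comparing the coefficient of each $X^{g}$. I would treat formula $(1)$ first. Expanding $X_{M}X_{N}$ by the definition of the quantum Caldero--Chapoton map gives a double sum over submodule dimension vectors $\underline{e}_{1}$ of $M$ and $\underline{e}_{2}$ of $N$; applying the torus relation $X^{f_{1}}X^{f_{2}}=q^{\Lambda(f_{1},f_{2})/2}X^{f_{1}+f_{2}}$ to merge the two monomials, the exponent becomes $-\widetilde{B}(\underline{e}_{1}+\underline{e}_{2})-(\widetilde{I}-\widetilde{R}^{tr})(\underline{m}+\underline{n})$, so only the total $\underline{e}:=\underline{e}_{1}+\underline{e}_{2}$ enters the monomial. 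Since $kQ$-mod is extension-closed in $k\widetilde{Q}$-mod, every middle term $E$ on the right is again a $kQ$-module with $\underline{\dim}\,E=\underline{m}+\underline{n}$, so expanding each $X_{E}$ by rule $(1)$ produces the same family of monomials indexed by $\underline{e}$. Comparing the coefficient of $X^{g}$ thus reduces the theorem, for each fixed $\underline{e}$, to a scalar identity between $\sum_{\underline{e}_{1}+\underline{e}_{2}=\underline{e}}|\mathrm{Gr}_{\underline{e}_{1}}M|\,|\mathrm{Gr}_{\underline{e}_{2}}N|$ with its $q$-twist and $\sum_{E}\varepsilon_{MN}^{E}\,|\mathrm{Gr}_{\underline{e}}E|$ with its own $q$-twist.

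The combinatorial core is a Green/Riedtmann--Peng counting identity. Given an extension $0\to N\to E\to M\to 0$ and a submodule $V\subseteq E$ of dimension $\underline{e}$, one records the pair $(V\cap N,\ V/(V\cap N)\hookrightarrow M)$ and counts how many $V$ give a prescribed pair of submodules of $N$ and $M$; summing over $[E]$ weighted by $\varepsilon_{MN}^{E}$ turns $\sum_{E}\varepsilon_{MN}^{E}|\mathrm{Gr}_{\underline{e}}E|$ into $\sum_{\underline{e}_{1}+\underline{e}_{2}=\underline{e}}|\mathrm{Gr}_{\underline{e}_{1}}M|\,|\mathrm{Gr}_{\underline{e}_{2}}N|$ up to a computable scalar. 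I would pass between the extension counts $\varepsilon_{MN}^{E}$ and the subobject counts $g^{E}_{MN}$ exactly as in the proof of Proposition \ref{Hall-Ringel}, via the Riedtmann--Peng formula $h^{MN}_{E}=g^{E}_{MN}|\mathrm{Aut}M||\mathrm{Aut}N||\mathrm{Aut}E|^{-1}$.

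The real work, and the step I expect to be the main obstacle, is reconciling the quadratic $q$-exponents. The left side carries $q^{[M,N]^{1}}$, the factors $q^{-\frac{1}{2}\langle\underline{e}_{1},\underline{m}-\underline{e}_{1}\rangle}$ and $q^{-\frac{1}{2}\langle\underline{e}_{2},\underline{n}-\underline{e}_{2}\rangle}$, and the torus twist $q^{\Lambda(f_{1},f_{2})/2}$, whereas the right side carries $q^{\frac{1}{2}\Lambda((\widetilde{I}-\widetilde{R}^{'})\underline{m},(\widetilde{I}-\widetilde{R}^{'})\underline{n})}$ and $q^{-\frac{1}{2}\langle\underline{e},(\underline{m}+\underline{n})-\underline{e}\rangle}$. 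Substituting $\underline{e}=\underline{e}_{1}+\underline{e}_{2}$ and expanding $\Lambda(f_{1},f_{2})$ by bilinearity, the constant piece $\Lambda((\widetilde{I}-\widetilde{R}^{tr})\underline{m},(\widetilde{I}-\widetilde{R}^{tr})\underline{n})$ is matched against the right-hand prefactor, while the compatibility relation \eqref{eq:simply_laced_compatible}, which yields $\widetilde{B}^{T}\Lambda=\begin{bmatrix}I_{n}&0\end{bmatrix}$, collapses each mixed term into an Euler form, for instance $\Lambda(\widetilde{B}\underline{e}_{1},(\widetilde{I}-\widetilde{R}^{tr})\underline{n})=\langle\underline{e}_{1},\underline{n}\rangle$. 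Using $\widetilde{B}=\widetilde{R}^{tr}-\widetilde{R}$, the identity $[M,N]-[M,N]^{1}=\langle\underline{m},\underline{n}\rangle$, and bilinearity of $\langle-,-\rangle$, one verifies that the residual exponent is independent of the splitting $(\underline{e}_{1},\underline{e}_{2})$ and equals precisely the scalar required by the Riedtmann--Peng conversion of the previous paragraph. This is a finite but delicate linear-algebra computation, and it carries essentially all the content of the theorem.

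For formula $(2)$ I would run the identical scheme, with extensions replaced by homomorphisms into the injective object. As $X_{I[-1]}=X^{\underline{\dim}\,\soc I}$ is a single monomial, the left side $q^{[M,I]}X_{M}X^{\underline{\dim}\,\soc I}$ is again a sum of monomials indexed by the submodules of $M$, and on the right I would expand each $X_{B\oplus I'[-1]}$ by rule $(2)$ of the Caldero--Chapoton map. The governing count now stratifies $\mathrm{Hom}_{k\widetilde{Q}}(M,I)$ by kernel $B$ and cokernel $I'$ through the sets $\mathrm{Hom}_{k\widetilde{Q}}(M,I)_{BI'}$, with the dimension constraint $\underline{\dim}\,B=\underline{m}-\underline{\dim}\,I+\underline{\dim}\,I'$; this is the cluster-category shadow of case $(1)$, in which $\mathrm{Ext}^{1}_{\mathcal{C}_{\widetilde{Q}}}(M,I[-1])$ is computed by $\mathrm{Hom}_{k\widetilde{Q}}(M,I)$. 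The $q$-power matching is entirely parallel: the prefactor $q^{\frac{1}{2}\Lambda((\widetilde{I}-\widetilde{R}^{'})\underline{m},-\underline{\dim}\,\soc I)}$ together with $q^{[M,I]}$ must absorb the Euler-form and socle-shift exponents on the two sides, which reduces once more to the same linear identities among $\Lambda$, the Euler form, and $\widetilde{B},\widetilde{R}$.
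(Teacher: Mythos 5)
First, a point of orientation: this paper does not prove Theorem \ref{multi} at all --- it is quoted from \cite{DX} and \cite{D} and used as a black box to derive Theorem \ref{alg-homo} --- so your attempt can only be measured against the proofs in those references. Your overall architecture does match theirs: expand both sides in the basis $\{X^{g}\}$ of $\Tcal$ (coefficient comparison is legitimate because the compatibility condition \eqref{eq:simply_laced_compatible} forces $\widetilde{B}$ to have full column rank --- a point you should state explicitly), and reduce formula $(1)$ to a counting identity for each fixed $\underline{e}$.

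However, your third paragraph asserts something false at exactly the step you yourself identify as carrying all the content. Using $\widetilde{B}^{T}\Lambda=[\,I_{n}\;0\,]$ and $\langle\underline{x},\underline{y}\rangle=\underline{x}^{T}(I_{n}-R^{tr})\underline{y}$, the mismatch between the exponent of the term $(\underline{e}_{1},\underline{e}_{2})$ on the left and the exponent of the term $\underline{e}=\underline{e}_{1}+\underline{e}_{2}$ on the right comes out to $\langle\underline{e}_{1},\underline{n}-\underline{e}_{2}\rangle$: it \emph{does} depend on the splitting. The identity you actually need, for each fixed $\underline{e}$, is
$$q^{[M,N]^{1}}\sum_{\underline{e}_{1}+\underline{e}_{2}=\underline{e}}q^{\langle\underline{e}_{1},\underline{n}-\underline{e}_{2}\rangle}\,|\mathrm{Gr}_{\underline{e}_{1}}M|\,|\mathrm{Gr}_{\underline{e}_{2}}N|\;=\;\sum_{E}\varepsilon_{MN}^{E}\,|\mathrm{Gr}_{\underline{e}}E|,$$
and no splitting-independent twist can replace the weight: for the quiver $1\rightarrow 2$ with $M=N=P_{1}$ and $\underline{e}=(0,1)$, the right side equals $|\mathrm{Gr}_{(0,1)}(P_{1}\oplus P_{1})|=q+1$, while the two splittings each contribute $1$, so a sum twisted by a single $q$-power gives $2q^{c}\neq q+1$. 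Moreover, the Riedtmann--Peng formula cannot supply the needed weight: it converts $\varepsilon_{MN}^{E}$ into $g_{MN}^{E}$ for a \emph{fixed} triple $(M,N,E)$, but says nothing about how the submodules of the various middle terms $E$ distribute over pairs $(C,A)$ with $C\subseteq M$, $A\subseteq N$. The missing ingredient is the hereditary fiber count: for fixed $C\in\mathrm{Gr}_{\underline{e}_{1}}M$ and $A\in\mathrm{Gr}_{\underline{e}_{2}}N$, the number of pairs $(\xi,V)$ with $\xi\in\mathrm{Ext}^{1}(M,N)$, $V\subseteq E_{\xi}$, $V\cap N=A$ and image $C$ in $M$, equals $|\mathrm{Hom}(C,N/A)|\cdot|\mathrm{Ker}(\mathrm{Ext}^{1}(M,N)\rightarrow\mathrm{Ext}^{1}(C,N/A))|=q^{[M,N]^{1}+\langle\underline{e}_{1},\underline{n}-\underline{e}_{2}\rangle}$, where the evaluation of the kernel uses surjectivity of $\mathrm{Ext}^{1}(M,N)\rightarrow\mathrm{Ext}^{1}(C,N/A)$ --- this is precisely where heredity of $kQ$-mod enters. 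It is this stratum-dependent $q$-power, not a constant, that absorbs your residual exponent term by term. Your sketch of formula $(2)$ inherits the same defect: stratifying $\mathrm{Hom}_{k\widetilde{Q}}(M,I)$ by kernel and cokernel again requires a fiber count whose weight varies with the stratum, and in addition you must verify that the monomials on the two sides agree at all, which involves relating $\underline{\mathrm{dim}}\,\mathrm{Soc}\,I'$ to $\underline{\mathrm{dim}}\,\mathrm{Soc}\,I$, $\widetilde{B}$ and $\widetilde{R}^{tr}$ --- a step you do not address. With these counting lemmas supplied (which is what \cite{DX}, following Caldero--Keller and Rupel, actually proves), your outline closes; without them it does not.
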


   Note that Theorem \ref{multi}(1) implies the following result which has been proved by Berenstein-Rupel  using generalities on bialgebras in braided
monoidal categories.
\begin{theorem}\cite{rupel3}\label{alg-homo}
The assignment $\delta_{V}\rightarrow X_{V}$ defines an algebra homomorphism $\Psi: \mathcal{H}^{*}(\mathcal{A})\rightarrow \mathcal{T}$.
\end{theorem}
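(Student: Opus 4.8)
The plan is to verify directly that $\Psi$ is multiplicative on the distinguished basis, that is, that $\Psi(\delta_{M}\ast\delta_{N})=X_{M}X_{N}$ for all $kQ$-modules $M,N$; since $\Psi$ is defined on the basis $\{\delta_{V}\}$ of $\mathcal{H}^{*}(\mathcal{A})$ and extended linearly, this is precisely the statement that $\Psi$ is an algebra homomorphism. Two preliminary normalizations make the computation transparent. First, because $M$, $N$, and any middle term $E$ of a short exact sequence $0\to N\to E\to M\to 0$ all have dimension vectors supported on the principal part $Q$, such an $E$ is again a $kQ$-module and its $k\widetilde{Q}$-structure is determined by its $kQ$-structure; hence $\Hom_{kQ}(M,N)=\Hom_{k\widetilde{Q}}(M,N)$ and $\Ext^{1}_{kQ}(M,N)=\Ext^{1}_{k\widetilde{Q}}(M,N)$, so the invariants $[M,N]$, $[M,N]^{1}$, $\varepsilon_{MN}^{E}$ of Theorem \ref{multi} agree with the corresponding Hall data of Proposition \ref{Hall-Ringel}. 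Second, $|\Hom_{kQ}(M,N)|=q^{[M,N]}$ and $|\Ext^{1}_{kQ}(M,N)_{E}|=\varepsilon_{MN}^{E}$, so that $h^{MN}_{E}=\varepsilon_{MN}^{E}\,q^{-[M,N]}$.

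Next I would expand $\Psi(\delta_{M}\ast\delta_{N})$. Writing $\Lambda_{M,N}:=\frac{1}{2}\Lambda((\widetilde{I}-\widetilde{R}')\underline{m},(\widetilde{I}-\widetilde{R}')\underline{n})$ for the quadratic factor occurring identically in Proposition \ref{Hall-Ringel} and in Theorem \ref{multi}(1), linearity of $\Psi$ and the first normalization give
\begin{align}
\Psi(\delta_{M}\ast\delta_{N})
&=q^{\Lambda_{M,N}+\langle\underline{m},\underline{n}\rangle}\sum_{E}h^{MN}_{E}X_{E}\nonumber\\
&=q^{\Lambda_{M,N}+\langle\underline{m},\underline{n}\rangle-[M,N]}\sum_{E}\varepsilon_{MN}^{E}X_{E}.\nonumber
\end{align}
Into this I would substitute Theorem \ref{multi}(1) in the form $\sum_{E}\varepsilon_{MN}^{E}X_{E}=q^{[M,N]^{1}-\Lambda_{M,N}}X_{M}X_{N}$, obtaining
$$\Psi(\delta_{M}\ast\delta_{N})=q^{\langle\underline{m},\underline{n}\rangle-[M,N]+[M,N]^{1}}X_{M}X_{N}.$$

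Finally I would evaluate the exponent. The two copies of $\Lambda_{M,N}$ cancel, and the Euler form gives $\langle\underline{m},\underline{n}\rangle=[M,N]-[M,N]^{1}$, so the total exponent is $\langle\underline{m},\underline{n}\rangle-[M,N]+[M,N]^{1}=0$ and $\Psi(\delta_{M}\ast\delta_{N})=X_{M}X_{N}=\Psi(\delta_{M})\Psi(\delta_{N})$, which completes the proof. I do not anticipate a serious obstacle: the entire analytic content is supplied by the cluster multiplication formula of Theorem \ref{multi}, and the task of the proof is only to confirm that the $q$-powers built into the dual Hall product of Proposition \ref{Hall-Ringel} are exactly those needed to absorb the prefactor $q^{[M,N]^{1}}$ together with the factor $q^{-[M,N]}$ coming from $|\Hom(M,N)|$. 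The only places requiring attention are this exponent bookkeeping and the identification of the $\Hom$ and $\Ext$ groups over $kQ$ and $k\widetilde{Q}$ recorded above.
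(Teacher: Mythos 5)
Your proposal is correct and follows essentially the same route as the paper: both apply $\Psi$ to the dual Hall product and invoke Theorem \ref{multi}(1), with the identity $h^{MN}_{E}=\varepsilon^{E}_{MN}q^{-[M,N]}$ and the Euler-form relation $\langle\underline{m},\underline{n}\rangle=[M,N]-[M,N]^{1}$ making the $q$-exponents cancel; the paper merely packages this bookkeeping as a rewriting of the cluster multiplication formula in terms of $h^{MN}_{E}$ before applying $\Psi$. Your explicit verification that the $\Hom$ and $\Ext$ groups over $kQ$ and $k\widetilde{Q}$ agree for $kQ$-modules is a detail the paper leaves implicit, but it is the same argument.
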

\nd \emph{An alternative proof}:
Note that the first cluster multiplication formula in Theorem \ref{multi} can be rewritten as
$$X_{M}X_{N}=q^{\frac{1}{2}\Lambda((\widetilde{I}-\widetilde{R}^{'})\underline{m},
(\widetilde{I}-\widetilde{R}^{'})\underline{n})+<\underline{m},\underline{n}>}\sum_{E}h^{MN}_{E}X_E.$$
Thus we have
\begin{eqnarray}
\nonumber \Psi(\delta_{M}\ast\delta_{N}) & = & \Psi(q^{\frac{1}{2}\Lambda((\widetilde{I}-\widetilde{R}^{'})\underline{m},
(\widetilde{I}-\widetilde{R}^{'})\underline{n})+\langle\underline{m},\underline{n}\rangle}\sum_{E}h^{MN}_{E}\delta_E)\\
\nonumber  & = & q^{\frac{1}{2}\Lambda((\widetilde{I}-\widetilde{R}^{'})\underline{m},
(\widetilde{I}-\widetilde{R}^{'})\underline{n})+\langle\underline{m},\underline{n}\rangle}\sum_{E}h^{MN}_{E}X_E\\
\nonumber & = & X_{M}X_{N}=\Psi(\delta_{M})\Psi(\delta_{N}).
\end{eqnarray}
This completes the proof.
\hfill$\square$

\section{Quantum cluster algebras  for bipartite graphs}
In this section, we assume that $Q$ is an acyclic  quiver whose
underlying graph is bipartite and the matrix  $B$ associated to the
quiver $Q$ is of full rank. Note that in this case the corresponding quantum cluster algebras are coefficient-free.
 We will show that the algebra
$\mathcal{AH}_{|k|}(Q)$ generated by all quantum cluster characters
is equal to the quantum cluster algebra $\mathcal{A}_{|k|}(Q)$.
\begin{definition}\label{def}
$X_{L}$ is called \emph{the quantum cluster character} if
$L\in\mathcal C_{Q}$.
\end{definition}

\begin{definition}\label{def2}
For a quiver $Q$, denote by $\mathcal{AH}_{|k|}(Q)$ the
 $\mathbb{Z}$-subalgebra of $\mathcal{F}$ generated by
all the quantum cluster characters.
\end{definition}

 Let $Q$ be an acyclic
quiver and $i$ be a sink or a source in $Q$. We define the reflected
quiver $\sigma_i(Q)$ by reversing all the arrows ending at $i$. An
\emph{admissible sequence of sinks (resp. sources)} is a sequence
$(i_1, \ldots, i_l)$ such that $i_1$ is a sink (resp. source) in $Q$
and $i_k$ is a sink (resp source) in $\sigma_{i_{k-1}}\cdots
\sigma_{i_1}(Q)$ for any $k=2, \ldots, l$. A quiver $Q'$ is called
\emph{reflection-equivalent}\index{reflection-equivalent} to $Q$ if
there exists an admissible sequence of sinks or sources $(i_1,
\ldots, i_l)$ such that $Q'=\sigma_{i_{l}}\cdots \sigma_{i_1}(Q)$.
Note that mutations can be viewed as generalizations of reflections,
i.e, if $i$ is a sink or a source in a quiver $Q$,
 then $\mu_i(Q)=\sigma_i(Q)$ where $\mu_i$ denotes the mutation in the direction
 $i$. We suppose that  $Q'$ is a quiver mutation-equivalent to $Q$.  Denote by
$\Phi_{i}: \mathcal{A}_{|k|}(Q)\rightarrow
\mathcal{A}_{|k|}(Q')$ the natural canonical isomorphism sending each initial cluster variable of $\mathcal{A}_{|k|}(Q)$ to its Laurent expansion in the initial cluster of $\mathcal{A}_{|k|}(Q')$.

Let $\Sigma_i^+:\ rep(Q) \longrightarrow \ rep(Q')$ be the standard
BGP-reflection functor and $R_i^+:\mathcal C_Q \longrightarrow
\mathcal C_{Q'}$ be the
        extended BGP-reflection functor defined in \cite{Zhu}:
        $$R_i^+:\left\{\begin{array}{rcll}
            X & \mapsto & \Sigma_i^+(X) & \textrm{ if }X \not \simeq S_i \textrm{ is a module}\\
            S_i & \mapsto & P_i[1] \\
            P_j[1] & \mapsto & P_j[1] & \textrm{ if }j \neq i\\
            P_i[1] & \mapsto & S_i
        \end{array}\right.$$
    In \cite{rupel}, the author proved the following result.
\begin{theorem}\cite{rupel}\label{ref}
For any indecomposable object $M$ in  $\mathcal C_{Q}$, we have
$\Phi_{i}(X_M^{Q})=X_{R_i^+M}^{Q'}.$
\end{theorem}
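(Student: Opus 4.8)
\noindent\emph{Proof proposal.}
The plan is to reduce to the explicit description of $R_i^+$ and to verify the identity case by case, exploiting that $i$ is a sink or a source. In that situation $\mu_i(Q)=\sigma_i(Q)=Q'$, so $\Phi_i$ is exactly the canonical isomorphism induced by the single mutation in direction $i$: it fixes $X_j$ for $j\neq i$ and relates $X_i$ to the exchanged variable of Proposition \ref{lem:cluster_mutation}. Both sides of the asserted equality thus lie in the quantum torus attached to $Q'$, so it suffices to compare their expansions in the initial cluster of $Q'$. Following the four clauses defining $R_i^+$, I would separate (a) the generic case of a module $M\not\simeq S_i$, where $R_i^+M=\Sigma_i^+M$, from (b) the three exceptional objects $S_i$, $P_i[1]$ and $P_j[1]$ with $j\neq i$.

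For case (a), assume $i$ is a sink (the source case being dual). The character $X_M^Q$ is a sum over $\underline e$ of $q$-powers times the monomials $X^{-\widetilde{B}\underline e-(\widetilde{I}-\widetilde{R}^{tr})\underline m}$, so it splits conceptually into a shift vector $-(\widetilde{I}-\widetilde{R}^{tr})\underline m$ and the Grassmannian sum indexed by $\underline e$. I would verify the two parts transform correctly under $\Phi_i$. First, the exchange matrix of $Q'$ is $\mu_i(\widetilde{B})$ and $\Sigma_i^+$ acts on dimension vectors by the reflection $s_i$; a direct computation then matches the shift vectors and the exponents $-\widetilde{B}\underline e$ after substitution of the exchanged variable. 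Second, for the Grassmannian sum I would use that, since $i$ is a sink and $M$ has no summand $S_i$, the functor $\Sigma_i^+$ produces a matching of $\mathbb{F}_q$-point counts $|\mathrm{Gr}_{\underline e}M|=|\mathrm{Gr}_{s_i\underline e}\Sigma_i^+M|$, and I would check that the accompanying half-integral $q$-powers coming from $-\frac{1}{2}\langle\underline e,\underline m-\underline e\rangle$ agree under $s_i$, using that the Euler form depends only on dimension vectors and transforms predictably between $Q$ and $Q'$.

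For case (b), the assignments $S_i\mapsto P_i[1]$ and $P_i[1]\mapsto S_i$ encode precisely the exchange at vertex $i$. Here I would use the monomial formula $X_{P[1]}=X^{\underline{\mathrm{dim}}P/\rad P}$ together with the observation that at the sink $i$ one has $P_i=S_i$ as modules. Thus $X_{S_i}^Q$ is the cluster character of the projective $P_i$, which realizes the mutated variable, while $X_{P_i[1]}^{Q'}=X^{e_i}=X_i^{Q'}$; comparing with Proposition \ref{lem:cluster_mutation} gives $\Phi_i(X_{S_i}^Q)=X_{P_i[1]}^{Q'}$, and the reverse identity $\Phi_i(X_{P_i[1]}^Q)=X_{S_i}^{Q'}$ follows from the exchange relation since $S_i$ is simple injective in $Q'$. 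For $j\neq i$ the object $P_j[1]$ is fixed by $R_i^+$ and $X_{P_j[1]}=X^{e_j}$ is fixed by $\Phi_i$, so the equality is immediate.

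The step I expect to be the main obstacle is the Grassmannian matching in case (a): the reflection functor modifies the vector space at the vertex $i$, so the equality of subrepresentation point-counts and the behaviour of the half-integral $q$-exponents under the substitution $\underline e\mapsto s_i\underline e$ must be tracked with care, in particular the way the component $e_i$ is replaced by $(s_i\underline e)_i$. Once this homological bookkeeping is settled, reassembling the shift and Grassmannian parts yields the desired equality $\Phi_i(X_M^Q)=X_{R_i^+M}^{Q'}$.
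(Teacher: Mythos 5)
First, a structural remark: the paper offers no proof of this statement at all --- Theorem \ref{ref} is quoted verbatim from \cite{rupel} ("In \cite{rupel}, the author proved the following result"), so your proposal can only be measured against Rupel's argument, not against anything in this paper.

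Measured that way, your case (a) contains a genuine gap: the identity $|\mathrm{Gr}_{\underline e}M|=|\mathrm{Gr}_{s_i\underline e}\Sigma_i^+M|$ on which the whole case rests is false. Take $Q\colon 1\to 2$ with sink $i=2$ and let $M=P_1$ be the representation $k\xrightarrow{\mathrm{id}}k$, so $M\not\simeq S_2$ and $\Sigma_2^+M=S_1$ over $Q'\colon 2\to 1$. Then $M$ has three submodules, of dimension vectors $(0,0)$, $(0,1)$, $(1,1)$, while $S_1$ has only two, of dimension vectors $(0,0)$ and $(1,0)$. In particular $|\mathrm{Gr}_{(0,1)}M|=1$ while $s_2(0,1)=(0,-1)$ gives $|\mathrm{Gr}_{(0,-1)}S_1|=0$; indeed even the total counts ($3$ versus $2$) disagree, so no stratum-by-stratum matching of Grassmannians can exist. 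The theorem is nevertheless true because $\Phi_i$ is not a monomial substitution: it replaces $X_i$ by the exchange binomial of Proposition \ref{lem:cluster_mutation}, so a single stratum $\mathrm{Gr}_{\underline e}M$ contributes, after raising that binomial of $q$-commuting monomials to a power and expanding by the quantum binomial theorem, to several strata of the Grassmannians of $\Sigma_i^+M$, and the coefficients agree only after a resummation of Gaussian binomial coefficients combined with an analysis of how subspaces at vertex $i$ interact with the kernel defining $(\Sigma_i^+M)_i$. That resummation is precisely the technical core of the proof in \cite{rupel}; it cannot be replaced by the bijection you propose. Your case (b) for the exceptional objects $S_i$, $P_i[1]$, $P_j[1]$ is essentially correct (at a sink one has $P_i=S_i$, and $X_{S_i}^Q$ is the once-mutated variable, matching $X_{P_i[1]}^{Q'}=X_i^{Q'}$), but without a valid case (a) the argument does not go through.
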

The following lemma is well-known.
\begin{lemma}\cite[Lemma 8(b)]{CK2005}\label{easy}
        Let $$M \longrightarrow E \longrightarrow N \longrightarrow M[1]$$
        be a non-split triangle in $\mathcal C_{Q}.$ Then
        $$\mathrm{dim}_{k}\mathrm{Ext}^{1}_{\mathcal C_{Q}}(E,E) < \mathrm{dim}_{k}\mathrm{Ext}^{1}_{\mathcal C_{Q}}(M \oplus N, M \oplus N).$$
\end{lemma}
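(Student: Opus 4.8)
The plan is to argue entirely inside the Hom-finite triangulated category $\mathcal C_Q$ and to bound $\dim_k\mathrm{Ext}^1_{\mathcal C_Q}(E,E)$ from above by feeding the given triangle into three long exact sequences of $\mathrm{Hom}$-groups. Write the triangle as $M\xrightarrow{f}E\xrightarrow{g}N\xrightarrow{h}M[1]$, recall $\mathrm{Ext}^1_{\mathcal C_Q}(X,Y)=\mathrm{Hom}_{\mathcal C_Q}(X,Y[1])$, and abbreviate $e(X,Y)=\dim_k\mathrm{Ext}^1_{\mathcal C_Q}(X,Y)$. The hypothesis that the triangle is non-split means precisely $h\neq 0$, and this is the single input that will force a strict drop of at least $1$ in the final dimension count.

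First I would apply the cohomological functor $\mathrm{Hom}_{\mathcal C_Q}(-,E)$ to the triangle and read off the exact three-term segment $\mathrm{Hom}_{\mathcal C_Q}(N,E[1])\xrightarrow{g^*}\mathrm{Hom}_{\mathcal C_Q}(E,E[1])\xrightarrow{f^*}\mathrm{Hom}_{\mathcal C_Q}(M,E[1])$; exactness at the middle gives at once $e(E,E)\le e(N,E)+e(M,E)$. Next I would control the two terms on the right by applying the covariant functors $\mathrm{Hom}_{\mathcal C_Q}(M,-)$ and $\mathrm{Hom}_{\mathcal C_Q}(N,-)$ to the same triangle. From $\mathrm{Hom}_{\mathcal C_Q}(M,-)$ the exact piece $\mathrm{Hom}_{\mathcal C_Q}(M,M[1])\to\mathrm{Hom}_{\mathcal C_Q}(M,E[1])\to\mathrm{Hom}_{\mathcal C_Q}(M,N[1])$ yields $e(M,E)\le e(M,M)+e(M,N)$. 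The functor $\mathrm{Hom}_{\mathcal C_Q}(N,-)$ is where the hypothesis enters: in the sequence $\mathrm{Hom}_{\mathcal C_Q}(N,N)\xrightarrow{h_*}\mathrm{Hom}_{\mathcal C_Q}(N,M[1])\xrightarrow{f_*}\mathrm{Hom}_{\mathcal C_Q}(N,E[1])\xrightarrow{g_*}\mathrm{Hom}_{\mathcal C_Q}(N,N[1])$ the connecting map $h_*$ sends $\mathrm{id}_N\mapsto h\circ\mathrm{id}_N=h\neq 0$, so $\dim\,\mathrm{im}(h_*)\ge 1$; since $\ker(f_*)=\mathrm{im}(h_*)$, tracking dimensions gives the strict estimate $e(N,E)\le e(N,M)+e(N,N)-1$.

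Adding the three inequalities produces $e(E,E)\le e(M,M)+e(N,N)+e(M,N)+e(N,M)-1$, and the right-hand side without the $-1$ is exactly $\dim_k\mathrm{Ext}^1_{\mathcal C_Q}(M\oplus N,M\oplus N)$ by additivity of $\mathrm{Ext}^1$ over direct sums, which is the claimed strict inequality. I would point out that the argument uses only that $\mathcal C_Q$ is Hom-finite triangulated and never the $2$-Calabi--Yau symmetry, since the two mixed terms $e(M,N)$ and $e(N,M)$ appear separately and identically on both sides. The step I expect to be the real obstacle is not the dimension bookkeeping but the identification that secures strictness: one must verify that the connecting homomorphism is post-composition by $h$ and hence carries $\mathrm{id}_N$ to the triangle class $h$, together with the standard fact that $h=0$ is equivalent to the triangle splitting; only these let the $-1$ survive the combination, and it does survive precisely because all three inequalities point the same way and are merely summed.
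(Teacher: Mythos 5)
Your argument is correct, and it is worth noting that the paper itself never proves this lemma: it is imported by citation from Caldero--Keller \cite[Lemma 8(b)]{CK2005}, so the comparison here is with that external source rather than with an in-paper argument. Your proof is a complete, self-contained alternative: the three exact segments you extract --- $\mathrm{Hom}(N,E[1])\to\mathrm{Hom}(E,E[1])\to\mathrm{Hom}(M,E[1])$ from the contravariant functor, $\mathrm{Hom}(M,M[1])\to\mathrm{Hom}(M,E[1])\to\mathrm{Hom}(M,N[1])$ from $\mathrm{Hom}(M,-)$, and $\mathrm{Hom}(N,N)\xrightarrow{h_*}\mathrm{Hom}(N,M[1])\to\mathrm{Hom}(N,E[1])\to\mathrm{Hom}(N,N[1])$ from $\mathrm{Hom}(N,-)$ --- are all legitimate, the identification of the connecting map as post-composition with $h$ is the standard description of these long exact sequences (up to the sign $-f[1]$ coming from the rotation axiom, which is harmless for kernel/image dimension counts), and the equivalence ``triangle split $\Leftrightarrow$ $h=0$'' is the standard fact you correctly flag as the source of strictness. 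Summing the three inequalities does yield $\dim_k\mathrm{Ext}^1(E,E)\le \dim_k\mathrm{Ext}^1(M\oplus N,M\oplus N)-1$. What your route buys is generality and transparency: it uses only that $\mathcal C_Q$ is a Hom-finite triangulated category, never the $2$-Calabi--Yau property or any structure specific to cluster categories, and it is the exact triangulated analogue of the classical dimension-count proof that the middle term of a non-split short exact sequence of finite-length modules has strictly smaller $\mathrm{Hom}$-dimension than the direct sum of its end terms. The cited source establishes the statement in the cluster-category setting where it is needed; your argument shows the lemma is a purely formal fact about triangles, which is a genuine (if modest) strengthening of the statement as quoted.
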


\begin{theorem}\label{theorem}
Assume that  $Q$ is  an acyclic  quiver  whose underlying graph is bipartite
and the matrix  $B$ associated to the quiver $Q$  is  of  full rank, then
$\mathcal{AH}_{|k|}(Q)=\mathcal{A}_{|k|}(Q).$
\end{theorem}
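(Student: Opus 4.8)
The plan is to prove the two inclusions $\mathcal{A}_{|k|}(Q)\subseteq\mathcal{AH}_{|k|}(Q)$ and $\mathcal{AH}_{|k|}(Q)\subseteq\mathcal{A}_{|k|}(Q)$ separately. The full-rank hypothesis on $B$ guarantees (as already observed) that the situation is coefficient-free, so $m=n$ and $\mathcal{A}_{|k|}(Q)$ is generated as an algebra by the cluster variables alone; the bipartite hypothesis guarantees that every vertex $k\in\{1,\ldots,n\}$ is a sink or a source, so that each mutation $\mu_k$ in a principal direction coincides with the reflection $\sigma_k$ and Theorem \ref{ref} is applicable at every $k$. These two facts are what make the reflection machinery available in all directions at once.

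For the inclusion $\mathcal{A}_{|k|}(Q)\subseteq\mathcal{AH}_{|k|}(Q)$ I would verify that every cluster variable is a quantum cluster character. The initial cluster variables are the $X_{P_i[1]}$ (equivalently the $X_{I_i[-1]}$), which are quantum cluster characters by Definition \ref{def}. Mutating from the initial seed in a direction $k$, which is a sink or source, Proposition \ref{lem:cluster_mutation} together with Theorem \ref{ref} identifies the new variable with $X_{R_k^+L}^{Q'}$ for a suitable rigid indecomposable $L$; iterating along an admissible sequence of reflections shows that every cluster variable equals $X_M$ for some rigid indecomposable $M\in\mathcal{C}_Q$. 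Hence all generators of $\mathcal{A}_{|k|}(Q)$ lie in $\mathcal{AH}_{|k|}(Q)$.

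For the reverse inclusion it suffices to show $X_L\in\mathcal{A}_{|k|}(Q)$ for every $L\in\mathcal{C}_Q$, since these generate $\mathcal{AH}_{|k|}(Q)$. My approach is to place each $X_L$ in the quantum upper cluster algebra $\mathcal{U}(\Lambda_M,\widetilde{B})$ and then invoke Theorem \ref{upper}, which, as $B$ is acyclic, gives $\mathcal{U}(\Lambda_M,\widetilde{B})=\mathcal{A}_{|k|}(Q)$. Directly from its defining formula, $X_L$ is an integral Laurent polynomial in the initial cluster $\mathbf{X}=\{X_1,\ldots,X_n\}$, so $X_L\in\mathbb{ZP}[\mathbf{X}^{\pm1}]$. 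For each $k$ the cluster $\mathbf{X}_k$ is the initial cluster of the reflected quiver $Q'=\sigma_k(Q)=\mu_k(Q)$ under the canonical isomorphism $\Phi_k$; since $k$ is a sink or source, Theorem \ref{ref} yields $\Phi_k(X_L^Q)=X_{R_k^+L}^{Q'}$, and the right-hand side is visibly an integral Laurent polynomial in the initial cluster of $Q'$. Pulling back along $\Phi_k$ gives $X_L\in\mathbb{ZP}[\mathbf{X}_k^{\pm1}]$ for every $k$, whence $X_L\in\mathbb{ZP}[\mathbf{X}^{\pm1}]\cap\bigcap_{k}\mathbb{ZP}[\mathbf{X}_k^{\pm1}]=\mathcal{U}(\Lambda_M,\widetilde{B})=\mathcal{A}_{|k|}(Q)$.

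The step I expect to be the main obstacle is handling the indecomposable objects $L$ that are \emph{not} rigid, which already occur for the Kronecker quiver, where the homogeneous tubes contain regular modules $L$ with $\mathrm{Ext}^1_{\mathcal{C}_Q}(L,L)\neq0$. A purely multiplicative argument based on Theorem \ref{multi} and Lemma \ref{easy} disposes of every \emph{decomposable} $L$: writing $L=M\oplus N$ with $\mathrm{Ext}^1(M,N)\neq0$, one solves formula \eqref{multi} for the split term $X_{M\oplus N}=X_L$, because by Lemma \ref{easy} each non-split middle term $E$ satisfies $\dim\mathrm{Ext}^1_{\mathcal{C}_Q}(E,E)<\dim\mathrm{Ext}^1_{\mathcal{C}_Q}(L,L)$ while $\dim\mathrm{Ext}^1(M,M)$ and $\dim\mathrm{Ext}^1(N,N)$ also drop, so induction on $\dim\mathrm{Ext}^1_{\mathcal{C}_Q}(L,L)$ closes. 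This induction can never reach an indecomposable non-rigid $L$, since such an object is never the split (top self-extension) term of any such relation; it is precisely here that the reflection argument of the previous paragraph is indispensable, as it certifies $X_L\in\mathcal{U}(\Lambda_M,\widetilde{B})$ with no decomposition of $L$ at all. The only remaining routine checks are that $\Phi_k$ respects the quasi-commutation relations, so that all identifications take place inside the single skew-field $\mathcal{F}$, and that the leading coefficient $\varepsilon_{MN}^{M\oplus N}$ in the decomposable case is invertible over the ground ring.
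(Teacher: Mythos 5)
Your proof of the inclusion $\mathcal{AH}_{|k|}(Q)\subseteq\mathcal{A}_{|k|}(Q)$ follows the same route as the paper's --- the upper-bound argument via Theorem \ref{ref} plus Theorem \ref{upper} for indecomposables, then the multiplication formula of Theorem \ref{multi} with Lemma \ref{easy} and induction on $\mathrm{dim}_{k}\mathrm{Ext}^{1}_{\mathcal{C}_Q}(L,L)$ for decomposables --- but it rests on a misreading of the hypothesis. In this paper ``the underlying graph of $Q$ is bipartite'' means that the graph \emph{admits} an orientation in which every vertex is a sink or a source, not that $Q$ itself carries such an orientation: the quiver $1\to 2\to 3$ satisfies the hypothesis, yet vertex $2$ is neither a sink nor a source. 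For such a vertex $k$ the mutation $\mu_k$ is not a reflection, Theorem \ref{ref} is unavailable in direction $k$, and your verification that $X_L\in\mathbb{ZP}[\mathbf{X}_k^{\pm1}]$ collapses; so your argument proves only what the paper calls Case 1 (alternating quivers). The paper closes this hole with a second step (Case 2): a bipartite $Q$ is reflection-equivalent to an alternating quiver $Q'$, the alternating case is applied to $\mathcal{C}_{Q'}$, and the conclusion is transported back to $Q$ through the canonical isomorphism $\Phi$ and Theorem \ref{ref}.

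The second gap is in your proof of $\mathcal{A}_{|k|}(Q)\subseteq\mathcal{AH}_{|k|}(Q)$. Iterating sink/source mutations from the initial seed produces only the characters $X_M$ with $M$ in the $R_i^{\pm}$-orbit of the objects $P_j[1]$; since BGP reflection functors send preprojective modules to preprojective ones, preinjectives to preinjectives, and regulars to regulars, no regular object ever enters this orbit. But for a bipartite tame or wild quiver with at least three vertices --- say $\widetilde{D}_4$ with all arrows pointing into the central vertex --- there exist rigid regular indecomposables (quasi-simples in the rank-two tubes), and their characters are genuine quantum cluster variables. These variables are invisible to your iteration, so the claim that ``every cluster variable equals $X_M$ for some rigid indecomposable $M$'' does not follow from reflections alone; it requires the theorem of Qin \cite{fanqin} (see also \cite{rupel2}) that for an acyclic quiver every quantum cluster variable is the quantum Caldero-Chapoton character of a rigid indecomposable object. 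Note that the paper's own proof never addresses this inclusion at all: it establishes only $\mathcal{AH}_{|k|}(Q)\subseteq\mathcal{A}_{|k|}(Q)$ and implicitly takes the converse from the cited literature. You were right to see that this direction needs an argument, but the argument must be the citation of \cite{fanqin}, not the sink/source iteration you propose.
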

\begin{proof}

Firstly, we prove that for any indecomposable object $M\in\mathcal
C_{Q}$, $X_M$ is in the quantum cluster algebra
$\mathcal{A}_{|k|}(Q)$.

{\em Case 1: If $Q$ is an alternating quiver (i.e, whose vertex is a sink or a source).}

Denoted by
$$\Phi_{i}: \mathcal{A}_{|k|}(Q)\rightarrow
\mathcal{A}_{|k|}(Q')$$ the canonical isomorphism of quantum cluster
algebras associated to sink or source $1\leq i\leq n$. It follows
from Theorem \ref{ref}, we obtain that
$\Phi_{i}(X_M^{Q})=X_{R_i^{\pm}{M}}^{Q'}$ for any indecomposable
object $M\in\mathcal C_{Q}$. It is easy to see that $Q'$ is again an
acyclic quiver. Then we obtain  that
$$X_M\in \mathbb{Z}[\textbf{X}^{\pm 1}]\cap \mathbb{Z}[\textbf{X}_{1}^{\pm 1}]\cap\cdots\cap \mathbb{Z}[\textbf{X}_{n}^{\pm 1}].$$
Note that the quiver  $Q$ is acyclic, thus the corresponding quantum
upper cluster algebra associated to $Q$  coincides with the quantum
cluster algebra $\mathcal{A}_{|k|}(Q)$ (see Theorem \ref{upper}).
Hence $X_M$ is in the quantum cluster algebra
$\mathcal{A}_{|k|}(Q)$.

{\em Case 2: If $Q$ is an acyclic  quiver whose underlying graph is
bipartite.}

Note that $Q$  is reflection equivalent to some
alternating quiver $Q'$ and in the {\em Case 1} we have showed that for any
indecomposable object $M\in\mathcal C_{Q'}$, $X_M$ is in the
corresponding quantum cluster algebra $\mathcal{A}_{|k|}(Q')$. Thus
the rest of the proof immediately follows from Theorem \ref{ref}.

Now we  need to prove that for any quantum cluster character
$X_{L}\in\mathcal{AH}_{|k|}(Q)$, then
$X_{L}\in\mathcal{A}_{|k|}(Q)$. Let $L\cong
\bigoplus_{i=1}^{l}L_{i}^{\oplus n_{i}}, n_{i}\in \mathbb{N}$ where
$L_{i}\ (1\leq i\leq l)$ are indecomposable objects in $\mathcal
C_{Q}$. According to Theorem \ref{multi}, we arrive at
the following equalty
$$X^{n_{1}}_{L_{1}}X^{n_{2}}_{L_{2}}\cdots X^{n_{l}}_{L_{l}}=q^{\frac{1}{2}n_{L}}X_{L}+
\sum_{\dim_{k}\mathrm{Ext}^{1}_{\mathcal
C_{Q}}(E,E)<\dim_{k}\mathrm{Ext}^{1}_{\mathcal
C_{Q}}(L,L)}f_{n_{E}}(q^{\pm\frac{1}{2}})X_E$$ where $n_{L}\in
\mathbb{Z}$ and $f_{n_{E}}(q^{\pm\frac{1}{2}})\in
\mathbb{Z}[q^{\pm\frac{1}{2}}].$ Using Lemma \ref{easy} and
proceeding  by induction, it is straightforward to verify that
$X_{L}\in \mathcal{A}_{|k|}(Q)$.
\end{proof}
By Theorem \ref{theorem}, we can deduce the following corollary.
\begin{corollary}\label{included}
Assume that  $Q$ is  an acyclic  quiver  whose underlying graph is bipartite,
and the matrix  $B$ associated to the quiver $Q$  is  of  full rank, then
$\Psi(\mathcal{H}^{*}(\mathcal{A}))\subseteq\mathcal{A}_{|k|}(Q).$
\end{corollary}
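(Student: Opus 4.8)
The plan is to deduce Corollary \ref{included} directly by combining the algebra homomorphism of Theorem \ref{alg-homo} with the identification of algebras established in Theorem \ref{theorem}. The key observation is that the homomorphism $\Psi\colon \mathcal{H}^{*}(\mathcal{A})\to\mathcal{T}$ is defined on the basis of delta-functions by $\delta_V\mapsto X_V$, so its image is spanned, as a set of algebra generators, precisely by the elements $X_V$ where $V$ ranges over $kQ$-modules. Thus $\Psi(\mathcal{H}^{*}(\mathcal{A}))$ is contained in the $\mathbb{Z}$-subalgebra of $\mathcal{F}$ generated by the $X_V$, and I want to land each such generator inside $\mathcal{A}_{|k|}(Q)$.

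First I would recall that, by Theorem \ref{alg-homo}, $\Psi$ is an algebra homomorphism, so $\Psi(\mathcal{H}^{*}(\mathcal{A}))$ is the $\mathbb{Z}$-subalgebra of $\mathcal{T}$ generated by $\{\Psi(\delta_V)=X_V : V \text{ a } kQ\text{-module}\}$. Next, under the hypotheses that the underlying graph of $Q$ is bipartite and $B$ has full rank, every $kQ$-module $V$ is in particular an object of the cluster category $\mathcal C_{Q}$, so each $X_V$ is a quantum cluster character in the sense of Definition \ref{def}. By Definition \ref{def2}, each $X_V$ therefore lies in $\mathcal{AH}_{|k|}(Q)$, and since $\mathcal{AH}_{|k|}(Q)$ is a $\mathbb{Z}$-subalgebra containing all these generators, we get the containment $\Psi(\mathcal{H}^{*}(\mathcal{A}))\subseteq\mathcal{AH}_{|k|}(Q)$.

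The final step is to invoke Theorem \ref{theorem}, whose conclusion is exactly $\mathcal{AH}_{|k|}(Q)=\mathcal{A}_{|k|}(Q)$ under the stated bipartite and full-rank hypotheses. Chaining the two containments gives
\[
\Psi(\mathcal{H}^{*}(\mathcal{A}))\subseteq\mathcal{AH}_{|k|}(Q)=\mathcal{A}_{|k|}(Q),
\]
which is the assertion of the corollary. There is essentially no obstacle here: the entire difficulty has been absorbed into Theorem \ref{theorem}, and the corollary is a formal consequence. The only point warranting a line of care is making explicit that the image of $\Psi$ is generated as an algebra by the $X_V$ for modules $V$ (so that it suffices to check membership on these generators rather than on arbitrary elements), and that these generators are genuine quantum cluster characters; once that is noted, the inclusion follows immediately from the definition of $\mathcal{AH}_{|k|}(Q)$ and the equality of Theorem \ref{theorem}.
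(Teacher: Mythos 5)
Your proposal is correct and follows exactly the paper's own two-step argument: Theorem \ref{alg-homo} gives $\Psi(\mathcal{H}^{*}(\mathcal{A}))\subseteq\mathcal{AH}_{|k|}(Q)$ (you merely spell out the routine verification that each generator $X_V$ is a quantum cluster character in the sense of Definitions \ref{def} and \ref{def2}, which the paper leaves implicit), and Theorem \ref{theorem} then yields $\mathcal{AH}_{|k|}(Q)=\mathcal{A}_{|k|}(Q)$.
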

\begin{proof}
By Theorem \ref{alg-homo}, we have $\Psi(\mathcal{H}^{*}(\mathcal{A}))\subseteq\mathcal{AH}_{|k|}(Q).$ Hence the proof immediately follows from Theorem \ref{theorem}.
\end{proof}

\begin{remark}
It is natural to ask when $\Psi(\mathcal{H}^{*}(\mathcal{A}))$ is equal to $\mathcal{A}_{|k|}(Q).$ The key ingredient of this problem is to prove that the initial cluster variables can be written as a $\ZZ[q^{\pm1/2}]-$combination of some product of cluster characters associated to $kQ$-modules. In the following, we give an example in this direction.
\end{remark}

\begin{example}
 We set  $\Lambda=\left(\begin{array}{cc} 0 & 1\\ -1 &
0\end{array}\right)$ and $B=\left(\begin{array}{cc} 0 & 2\\
-2 & 0\end{array}\right)$. Thus the quiver $Q$ associated to this pair is
the Kronecker quiver: \vspace*{-0.5cm}
\begin{center}
 \setlength{\unitlength}{0.61cm}
 \begin{picture}(5,4)
 \put(0,2){1}\put(0.4,2.2){$\bullet$}
\put(3,2.2){$\bullet$}\put(3.4,2){2} \put(0.8,2.5){\vector(3,0){2}}
\put(0.8,2.2){\vector(3,0){2}}
 \end{picture}
 \end{center}
\vspace*{-1cm}

Let $k$ be a finite field with cardinality $|k|=q^2$. The category
$rep(kQ)$ of finite-dimensional representations can be identified
with the category of mod-$kQ$ of finite-dimensional modules over the
path algebra $kQ.$ It is well-known (see \cite{dlab}) that up to isomorphism the
indecomposable $kQ$-module contains  three
families: the preprojective
modules with dimension vector $(n-1,n)$ (denoted by $M(n)$), the indecomposable regular modules with dimension vector
$(nd_p, nd_p)$ for $p\in \mathbb{P}^1_k$ of degree $d_p$ (in
particular, denoted by $R_p(n)$ for $d_p=1$) and the
preinjective modules with dimension vector $(n,n-1)$ (denoted by
$N(n)$). Here $n\in \mathbb{N}$.

For $m\in \mathbb{Z}\setminus\{1, 2\}$, set
\[V(m)=\begin{cases}
N(m-2)& \text{if $m\geq 3$;}\\
M(-m+1) & \text{if $m\leq 0$.}
\end{cases}
\]

Now, let  $\Tcal=\ZZ[q^{\pm 1/2}]\langle X_1^{\pm1}, X_2^{\pm1}:
X_1X_2=qX_2X_1\rangle$ and  ${\mathcal F}$ be the skew field of
fractions of $\Tcal$. The quantum cluster algebra of the
Kronecker quiver is the $\ZZ[q^{\pm 1/2}]$-subalgebra of
${\mathcal F}$ generated by the cluster variables in $\{X_k |k\in\ZZ\}$
defined recursively by
$$X_{m-1}X_{m+1}= qX_m^2+1.$$
With the above notation, we have  the following results: 
\begin{lemma}\cite{rupel}\label{rank2}
For any   $m\in\ZZ\setminus \{1,2\}$, the $m$-th cluster variable
$X_m$ of $\Acal_q(2,2)$ equals $X_{V(m)}$.
\end{lemma}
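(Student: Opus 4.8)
The plan is to use that the cluster variables $X_m$ ($m\in\ZZ$) of $\Acal_q(2,2)$ are completely determined by the initial pair $X_1=X^{e_1}$, $X_2=X^{e_2}$ together with the recursion $X_{m-1}X_{m+1}=qX_m^2+1$. By the identities recalled at the end of Section 2.2 one has $X_{P_1[1]}=X^{e_1}=X_1$ and $X_{P_2[1]}=X^{e_2}=X_2$, so it is natural to organise all the relevant objects into a single $\ZZ$-indexed family $L_m$ with $L_1=P_1[1]$, $L_2=P_2[1]$ and $L_m=V(m)$ for $m\notin\{1,2\}$. It then suffices to prove $X_{L_m}=X_m$ for every $m$, and by the symmetry between the preinjective ray ($m\ge 3$) and the preprojective ray ($m\le 0$) I would treat one direction and transfer the argument to the other. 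First I fix the base cases near the initial seed, then I propagate outward.

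For the base cases I would evaluate the quantum Caldero--Chapoton map directly on the two simple modules $N(1)=S_1=V(3)$ and $M(1)=S_2=V(0)$. Each has only its two trivial submodules, so every Grassmannian is a single point, the Euler-form twists vanish, and the defining formula collapses to a two-term Laurent polynomial; a short computation gives $X_{V(3)}=X_1^{-1}(qX_2^2+1)=X_3$ and $X_{V(0)}=(qX_1^2+1)X_2^{-1}=X_0$. It is instructive to locate the source of the additive constant: the zero submodule contributes the monomial $X^{-e_1}$ (resp. $X^{-e_2}$) to $X_{V(3)}$ (resp. $X_{V(0)}$), and the product $X^{e_1}X^{-e_1}=X^0=1$ (resp. $X^{e_2}X^{-e_2}=1$) is exactly what produces the ``$+1$'' in the recursion. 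Thus the constant is a feature of the explicit Laurent expansion at the junction with the initial seed, not of a vanishing exchange triangle.

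To propagate outward I would invoke the extended BGP reflection functors through Theorem \ref{ref}. For the Kronecker quiver every mutation of a seed in $\Acal_q(2,2)$ is a reflection at a sink or a source, and reflecting $Q$ returns a quiver $Q'$ isomorphic to $Q$ after interchanging the two vertices. Under this identification $R_i^+$ permutes the non-regular indecomposables of $\mathcal C_Q$ by shifting the family $L_m$ one step, while the canonical isomorphism $\Phi_i$ realises the corresponding mutation of cluster variables; Theorem \ref{ref} says precisely that these two operations intertwine, $\Phi_i(X_{L_m})=X_{R_i^+L_m}$. Iterating from the base cases then transports $X_{L_m}=X_m$ to all remaining indices in both directions. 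As an alternative that stays inside $\mathcal C_Q$, one may feed consecutive preinjectives $M=V(m-1)$ and $N=V(m+1)$ into Theorem \ref{multi}(1): the group $\mathrm{Ext}^1_{kQ}(V(m-1),V(m+1))$ is one-dimensional, its essentially unique non-split extension being $0\to V(m+1)\to V(m)^{\oplus2}\to V(m-1)\to 0$, which supplies the quadratic term $qX_{V(m)}^2$, whereas the constant must be recovered separately from the interaction with the shifted projectives governed by Theorem \ref{multi}(2).

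The hard part will be the bookkeeping of the scalar factors. Because $|k|=q^2$ while the quantum torus is normalised by $X_1X_2=qX_2X_1$, one must keep rigorously apart the powers that count $k$-points (Grassmannians, and the cardinalities of $\mathrm{Hom}$ and $\mathrm{Ext}$ groups) from the symplectic twists $q^{\Lambda(\cdot,\cdot)/2}$; making the coefficient of $X_{V(m)}^2$ come out as exactly $q$ and the additive term as exactly $1$ is where all the care lies. The cluster-multiplication route carries the extra burden that the quantum Caldero--Chapoton map is not multiplicative on direct sums, so neither $X_{V(m)^{\oplus2}}$ nor the split middle term $X_{V(m-1)\oplus V(m+1)}$ may be replaced naively by $X_{V(m)}^2$ or $X_{V(m-1)}X_{V(m+1)}$; reconciling these with the prefactors $q^{[M,N]^1}$ and $q^{\frac12\Lambda((\widetilde I-\widetilde R')\underline{m},(\widetilde I-\widetilde R')\underline{n})}$ is the genuinely delicate computation. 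The reflection-functor argument sidesteps this entirely by transporting a single clean base identity along the quiver, which is why I would take it as the primary route and keep the cluster-multiplication computation as a consistency check.
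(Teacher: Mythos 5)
The paper does not actually prove this lemma: it is quoted verbatim from \cite{rupel}, so there is no internal argument to compare against. What you have reconstructed is, in outline, exactly the strategy of that cited source (and the same machinery the paper itself uses in its Theorem \ref{theorem}): anchor the identity at the initial seed and propagate along the preprojective and preinjective rays with the extended BGP reflection functors via Theorem \ref{ref}. Your base cases are correct: for $S_1=V(3)$ the quantum Caldero--Chapoton formula gives $X_{S_1}=X^{-e_1}+X^{-e_1+2e_2}=X_1^{-1}(qX_2^2+1)=X_3$, and similarly $X_{S_2}=X^{2e_1-e_2}+X^{-e_2}=(qX_1^2+1)X_2^{-1}=X_0$, the twists $q^{\Lambda(\cdot,\cdot)/2}$ cancelling exactly as you describe, and your identification of the zero submodule as the source of the constant term is also right.

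The one step you assert rather than prove is the heart of the induction: Theorem \ref{ref} gives $\Phi_i(X_{L_m})=X_{R_i^+L_m}$, but to conclude $X_{L_{m+1}}=X_{m+1}$ from $X_{L_m}=X_m$ you also need to know how $\Phi_i$ acts on the abstract $\ZZ$-indexed family of cluster variables, namely that under the vertex-swap identification $Q'\cong Q$ the composite of $\Phi_i$ with the relabelling sends $X_m^Q$ to $X_{m+1}^Q$ (resp.\ $X_{m-1}^Q$) precisely in step with the functor sending $L_m$ to $L_{m+1}$ (resp.\ $L_{m-1}$). This does hold --- $\Phi_i$ commutes with mutation, the exchange graph of a rank-2 cluster algebra is a line, so matching a single seed (which your base cases accomplish) forces a uniform shift on all seeds, and a direct check on dimension vectors (e.g.\ $s_2(1,0)=(1,2)$, which relabels to $\underline{\dim}\,N(2)$) confirms the directions agree --- but this bookkeeping, not any scalar issue, is where the content of the induction lies. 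Indeed your stated worry about keeping $|k|^{1/2}$ apart from the torus parameter is misplaced for the primary route, since Theorem \ref{ref} already absorbs all $q$-power normalizations; those scalar subtleties afflict only the alternative cluster-multiplication route, which you correctly demote to a consistency check (and which, as you note, cannot produce the constant term without a separate appeal to Theorem \ref{multi}(2)).
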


\begin{lemma}\cite{DX1}\label{rank2-kro}
For any $n\in \mathbb{Z},$ we have
$$X_{n}X_{R_p(1)}=q^{-\frac{1}{2}}X_{n-1}+q^{\frac{1}{2}}X_{n+1}.$$
\end{lemma}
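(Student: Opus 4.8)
The plan is to turn the identity into a statement about quantum cluster characters of explicit Kronecker modules and then to evaluate the products with the cluster multiplication formula of Theorem~\ref{multi}. By Lemma~\ref{rank2} I may substitute $X_m=X_{V(m)}$ for every $m\notin\{1,2\}$, so that for $n\le 0$ the left-hand side reads $X_{M(-n+1)}X_{R_p(1)}$ and for $n\ge 3$ it reads $X_{N(n-2)}X_{R_p(1)}$, a product of the character of a preprojective (resp. preinjective) module with that of the regular simple $R_p(1)$ of dimension vector $(1,1)$. Note that $R_p(1)$ is not rigid, so $X_{R_p(1)}$ is not itself a cluster variable, which is why the statement is phrased as a product. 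The finitely many transitional values $n\in\{0,1,2,3\}$, where one of $X_{n-1},X_n,X_{n+1}$ is an initial variable $X_1$ or $X_2$ (a shifted projective $X_{P_i[1]}$), I would dispatch by evaluating the quantum Caldero--Chapoton map directly: the module Grassmannians of $R_p(1)$, of $S_2$, and of the shifted projectives involved are all single points, so these reduce to short finite checks.

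For the generic values I read the homological data off the Auslander--Reiten theory of the Kronecker quiver. With $\underline r=(1,1)$ the Euler form gives $\langle V(n),R_p(1)\rangle=-\langle R_p(1),V(n)\rangle=\pm1$, and since $\tau R_p(1)\cong R_p(1)$ exactly one of $\Ext^1(R_p(1),V(n))$, $\Ext^1(V(n),R_p(1))$ is one-dimensional while the complementary $\Hom$ space is one-dimensional. Concretely, on the preprojective side the unique non-split extension $0\to M(a)\to E\to R_p(1)\to 0$ has indecomposable middle term $E=M(a+1)$, and the (scalar-unique) homomorphism $M(a)\to R_p(1)$ is surjective with kernel $M(a-1)$; the preinjective side is dual. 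Thus the two neighbours $X_{n-1}$ and $X_{n+1}$ demanded by the right-hand side are produced respectively by an extension and by a homomorphism, their roles being interchanged between the two ranges, which is consistent with the fixed coefficients $q^{\mp1/2}$.

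The extension is exactly what Theorem~\ref{multi}(1) sees: applied to the ordered pair with one-dimensional $\Ext^1$ it yields one of $X_{V(n\pm1)}$ with coefficient $|k|-1=q^2-1$ (the number of nonzero extension classes), times the power of $q$ coming from $\tfrac{1}{2}\Lambda\big((\widetilde I-\widetilde R)\underline r,(\widetilde I-\widetilde R)\underline m\big)$ and from the factor $q^{[\,,\,]^1}$, together with the decomposable character $X_{R_p(1)\oplus V(n)}$. The homomorphism term is invisible to formula~(1) by itself: applying it in the opposite order only expresses $X_{R_p(1)\oplus V(n)}$ through the other product $X_{V(n)}X_{R_p(1)}$, and since $X_{R_p(1)}$ and $X_{V(n)}$ are genuine Laurent polynomials (not monomials) the two orderings are not proportional, so eliminating the decomposable character leaves a single relation in two unknown products. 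To produce the missing summand I would invoke the cluster multiplication formula of \cite{DX}\cite{D} underlying Theorem~\ref{multi} in its triangulated form, where the one-dimensional $\Ext^1_{\mathcal{C}}(R_p(1),V(n))$ carries two triangles, one realizing the module extension (middle term one of $V(n\pm1)$) and one realizing the homomorphism through $R_p(1)[1]$ (middle term the other of $V(n\pm1)$), so that both summands appear at once; alternatively one verifies the identity by a direct Grassmannian computation of $X_{R_p(1)}$ and $X_{V(n)}$ in the quantum torus, which is finite here.

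The principal obstacle is therefore precisely this second summand: formula~(1) is a faithful count of short exact sequences and registers only the extension, so the homomorphism/triangle contribution must be extracted from the triangulated version of the multiplication formula rather than from~(1) alone. The remaining delicate point is purely bookkeeping: the half-integer exponents $q^{\pm1/2}$ have to be pinned down from the entries of $\Lambda$ and the Euler form via the compatibility relation \eqref{eq:simply_laced_compatible}, and it is in this normalization, rather than in any structural step, that sign errors are most likely to enter.
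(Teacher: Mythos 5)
You are judged against a citation here: the paper offers no proof of Lemma \ref{rank2-kro} at all, it simply imports it from \cite{DX1}, so your argument must stand on its own. Much of your setup does: the reduction via Lemma \ref{rank2} to products $X_{V(n)}X_{R_p(1)}$ of module characters, the homological bookkeeping ($\langle V(n),R_p(1)\rangle=\pm1$, the non-split extension with middle term $M(a+1)$, the surjection $M(a)\to R_p(1)$ with kernel $M(a-1)$), and above all your diagnosis that Theorem \ref{multi}(1) is one-directional: applied to $(R_p(1),M(a))$ it yields the decomposable term plus $(|k|-1)X_{M(a+1)}$, applied to $(M(a),R_p(1))$ it yields only the decomposable term, and eliminating $X_{M(a)\oplus R_p(1)}$ leaves a single relation between the two ordered products and $X_{M(a+1)}$, with $X_{M(a-1)}$ nowhere in sight. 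This diagnosis is correct and is exactly the crux.

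The gap is in how you close it. Your primary mechanism is to invoke ``the cluster multiplication formula of \cite{DX}\cite{D} underlying Theorem \ref{multi} in its triangulated form,'' in which the two triangles attached to the one-dimensional $\Ext^1_{\mathcal C_Q}$ in the two directions would produce $X_{M(a+1)}$ and $X_{M(a-1)}$ simultaneously. No such formula exists in this paper or in \cite{DX}\cite{D}: those references prove precisely the one-directional identities (1) and (2) quoted in Theorem \ref{multi} --- indeed the one-directional form is what makes the Hall-algebra homomorphism of Theorem \ref{alg-homo} possible --- and a quantum analogue of the two-directional Caldero--Keller theorem is a genuinely harder statement which, for the Kronecker quiver, essentially \emph{contains} the lemma you are trying to prove; invoking it is unsupported, bordering on circular. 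Your fallback, a direct computation of the Laurent expansions of $X_{V(n)}$ and $X_{R_p(1)}$ in the quantum torus, is the honest route (and is in substance how \cite{DX1} proceeds), but for arbitrary $n$ it requires the general point counts of the quiver Grassmannians of $M(a)$ and $N(a)$ and a $q$-binomial identity, i.e.\ it is the entire content of the lemma, and you defer it rather than carry it out. Two further bookkeeping points confirm the nontriviality you are waving past: the extension term enters formula (1) with coefficient $|k|-1$, so a cancellation against the decomposable term must occur before the unit coefficients $q^{\mp1/2}$ can emerge; and in this example $|k|=q^2$, so the ``$q$'' of Theorem \ref{multi} is the square of the ``$q$'' in the statement --- a normalization shift your exponent analysis never acknowledges.
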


\begin{theorem}\label{included-kro}
Assume that  $Q$ is  the Kronecker quiver, then
$\Psi(\mathcal{H}^{*}(\mathcal{A}))=\mathcal{A}_{|k|}(Q).$
\end{theorem}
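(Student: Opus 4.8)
The plan is to prove the two inclusions separately. One of them is already in hand: the Kronecker quiver is alternating (vertex $1$ is a source and vertex $2$ a sink), hence bipartite, and its matrix $B$ has full rank since $\det B=4\neq 0$; thus Corollary \ref{included} gives $\Psi(\mathcal{H}^{*}(\mathcal{A}))\subseteq \mathcal{A}_{|k|}(Q)$. The whole content of the theorem is therefore the reverse inclusion $\mathcal{A}_{|k|}(Q)\subseteq \Psi(\mathcal{H}^{*}(\mathcal{A}))$. Since $\mathcal{A}_{|k|}(Q)$ is generated as a $\ZZ[q^{\pm 1/2}]$-algebra by the cluster variables $\{X_m:m\in\ZZ\}$, and since $\Psi(\mathcal{H}^{*}(\mathcal{A}))$ is a $\ZZ[q^{\pm 1/2}]$-subalgebra of $\mathcal{F}$ containing $X_M=\Psi(\delta_M)$ for every $kQ$-module $M$ and closed under products (because $\Psi$ is an algebra homomorphism), it suffices to show that each cluster variable $X_m$ lies in $\Psi(\mathcal{H}^{*}(\mathcal{A}))$.

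For every $m\in\ZZ\setminus\{1,2\}$ this is immediate from Lemma \ref{rank2}: there $X_m=X_{V(m)}$ with $V(m)$ a genuine $kQ$-module (the preinjective $N(m-2)$ when $m\geq 3$, the preprojective $M(-m+1)$ when $m\leq 0$), so $X_m=\Psi(\delta_{V(m)})$ lies in the image. The only cluster variables not of this form are the two initial ones $X_1$ and $X_2$, which in the cluster category correspond to the shifted projectives $P_i[1]$ rather than to modules. Realizing them inside the subalgebra generated by module characters is the actual point of the theorem, and is precisely the phenomenon flagged in the preceding remark.

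To produce $X_1$ and $X_2$ I would feed well-chosen indices into the multiplication-by-$R_p(1)$ identity of Lemma \ref{rank2-kro}, namely $X_nX_{R_p(1)}=q^{-1/2}X_{n-1}+q^{1/2}X_{n+1}$. Taking $n=0$, and using $X_0=X_{V(0)}=X_{S_2}$ together with $X_{-1}=X_{V(-1)}=X_{M(2)}$, one solves for the remaining term to get
$$X_1=q^{-1/2}X_{S_2}X_{R_p(1)}-q^{-1}X_{M(2)}.$$
Taking instead $n=3$, and using $X_3=X_{V(3)}=X_{S_1}$ together with $X_4=X_{V(4)}=X_{N(2)}$, one gets
$$X_2=q^{1/2}X_{S_1}X_{R_p(1)}-q\,X_{N(2)}.$$
Each right-hand side is a $\ZZ[q^{\pm 1/2}]$-combination of products of cluster characters of honest $kQ$-modules; for example $X_{S_2}X_{R_p(1)}=\Psi(\delta_{S_2}\ast\delta_{R_p(1)})$ lies in the image. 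Hence $X_1,X_2\in\Psi(\mathcal{H}^{*}(\mathcal{A}))$, which together with the first paragraph establishes the desired equality.

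The step carrying genuine content is the one just described. The exchange relations by themselves cannot recover $X_1$ or $X_2$ from the remaining cluster variables, because those variables enter the relations $X_{m-1}X_{m+1}=qX_m^2+1$ quadratically; one really needs the regular-module formula of Lemma \ref{rank2-kro} to split each initial variable into a $\ZZ[q^{\pm 1/2}]$-combination of products of module characters. I would thus treat as the crux the observation that $n=0$ and $n=3$ are the right choices precisely because $X_0=X_{S_2}$ and $X_3=X_{S_1}$ are themselves module characters, so that the products $X_{S_2}X_{R_p(1)}$ and $X_{S_1}X_{R_p(1)}$ provably land in the image; the remainder is the formal reduction to algebra generators.
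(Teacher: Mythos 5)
Your proposal is correct and follows essentially the same route as the paper's own proof: one inclusion from Corollary \ref{included}, reduction to the initial variables $X_1,X_2$ via Lemma \ref{rank2}, and then solving the relations $X_0X_{R_p(1)}=q^{-1/2}X_{-1}+q^{1/2}X_1$ and $X_3X_{R_p(1)}=q^{-1/2}X_2+q^{1/2}X_4$ from Lemma \ref{rank2-kro}, exactly the paper's choices $n=0$ and $n=3$. Your explicit identifications $V(0)=S_2$, $V(-1)=M(2)$, $V(3)=S_1$, $V(4)=N(2)$ and the remark on why the exchange relations alone cannot produce $X_1,X_2$ are accurate additions, but the substance coincides with the published argument.
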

\begin{proof}
 By Corollary \ref{included}, we know that $\Psi(\mathcal{H}^{*}(\mathcal{A}))\subseteq\mathcal{A}_{|k|}(Q).$ Note that $\Psi$ is  an algebra homomorphism according to Theorem \ref{alg-homo}, thus it is enough to prove that $X_{1}$ and $X_{2}$ have preimages. By Lemma \ref{rank2-kro}, we have $X_{0}X_{R_p(1)}=q^{-\frac{1}{2}}X_{-1}+q^{\frac{1}{2}}X_{1}.$ 
This gives $X_{1}=q^{-\frac{1}{2}}X_{0}X_{R_p(1)}-q^{-1}X_{-1}$ which can be rewritten as $X_{1}=q^{-\frac{1}{2}}X_{V(0)}X_{R_p(1)}-q^{-1}X_{V(-1)}$ according to Lemma \ref{rank2}. Hence we have 
$X_{1}=q^{-\frac{1}{2}}\Psi(\delta_{V(0)})\Psi(\delta_{R_p(1)})-q^{-1}\Psi(\delta_{V(-1)})=
\Psi(q^{-\frac{1}{2}}\delta_{V(0)}\ast\delta_{R_p(1)}-q^{-1}\delta_{V(-1)}).$ Similarly we have
 $X_{3}X_{R_p(1)}=q^{-\frac{1}{2}}X_{2}+q^{\frac{1}{2}}X_{4},$  and using the same method we deduce that 
 $X_{2}=\Psi(q^{\frac{1}{2}}\delta_{V(3)}\ast\delta_{R_p(1)}-q\delta_{V(4)}).$ This completes the proof.
\end{proof}

\end{example}


\end{document}